\theoremstyle{plain}% default
\newtheorem{thm}{Theorem}[section]
\newtheorem{lem}{Lemma}[section]
\newtheorem{prop}{Proposition}[section]
\newtheorem{cor}{Corollary}[section]
\newtheorem{defs}{Definition}[section]
\theoremstyle{definition}
\newtheorem{rmk}{Remark}[section]
\newcommand{\NN}{{\mathbb{N}}}
\newcommand{\RR}{{\mathbb{R}}}
\newcommand{\bfu}{\mathbf{u}}
\newcommand{\bfv}{\mathbf{v}}
\newcommand{\bfw}{\mathbf{w}}
\newcommand{\bff}{\mathbf{f}}
\newcommand{\bfx}{\mathbf{x}}
\newcommand{\bfA}{\mathbf{A}}
\newcommand{\bfB}{\mathbf{B}}
\newcommand{\bfF}{\mathbf{F}}
\newcommand{\bfnabla}{\boldsymbol{\nabla}}
\newcommand{\Ccal}{{\mathcal C}}
\newcommand{\Bcal}{{\mathcal B}}
\newcommand{\Pcal}{{\mathcal P}}
\newcommand{\Tcal}{{\mathcal T}}
\newcommand{\Ucal}{{\mathcal U}}
\newcommand{\rmd}{\mathrm{d}}
\newcommand{\rmw}{\mathrm{w}}
\newcommand{\cyl}{{\mathrm{cyl}}}
\newcommand{\fpsss}{{\mathrm{fpsss}}}
\newcommand{\wsss}{{\mathrm{wsss}}}
\newcommand{\inv}{{\operatorname{inv}}}
\newcommand{\average}[1]{\langle{#1}\rangle}
\newcommand{\dual}[1]{\langle{#1}\rangle}
\newcommand{\Lim}{\operatorname*{\textsc{Lim}}_{T\rightarrow \infty}}
\newcommand{\dontshow}[1]{}
\mathchardef\mhyphen="2D
\begin{document}
\numberwithin{equation}{section}

% Article info

\title[Optimal minimax bounds for the Navier-Stokes equations]{Optimal minimax bounds for time and ensemble averages for the incompressible Navier-Stokes equations}

\author[R. Rosa]{Ricardo M. S. Rosa}

\author[R. Temam]{Roger M. Temam}

\address[Ricardo M. S. Rosa]{Instituto de Matem\'atica, Universidade Federal do Rio de Janeiro, Brazil}
\address[Roger M. Temam]{Indiana University, Bloomington, IN, USA}

\email[R. Rosa]{rrosa@im.ufrj.br}
\email[R. Temam]{temam@indiana.edu}

\date{\today}

\subjclass[2020]{76D05, 76D06, 35Q30, 37L99}
\keywords{dissipative systems, Navier-Stokes equations, mean bounds, ensemble averages, time averages, stationary statistical solutions, invariant measures}

\begin{abstract}
  Obtaining sharp estimates for quantities involved in a given model is an integral part of the modeling process. For dynamical systems whose orbits display a complicated, perhaps chaotic, behaviour, the aim is usually to estimate time or ensemble averages of given quantities. This is the case, for instance, in turbulent flows. In this work, the aim is to present a minimax optimization formula that yields optimal bounds for time and/or ensemble averages for the two- and three-dimensional Navier-Stokes equations. The results presented here are extensions to the infinite-dimensional setting of a recent result on the finite-dimensional case given by Tobasco, Goluskin, and Doering in 2017. The optimal result occurs in the form of a minimax optimization problem and does not require knowledge of the solutions, only the law of the system. The minimax optimization problem appears in the form of a maximization over a portion of the phase space of the system and a minimization over a family of auxiliary functions made of cylindrical test functionals defined on the phase space. The function to be optimized is the desired quantity plus the duality product between the law of the system and the derivative of the auxiliary function.
\end{abstract}

\maketitle

\begin{center}
\parbox{5.6in}{\emph{This work is dedicated to the memory of Ciprian Foias, with whom we developed much of the material on which this article is built. Ciprian was a friend, a long time collaborator, and a great source of motivation and inspiration. The pleasant walks to the coffee shops will not be the same.}}
\end{center}

\section{Introduction}

In this work, we extend to an infinite dimensional setting a recent result on optimal bounds of mean quantities of evolutionary differential equations proved by Tobasco, Goluskin and Doering in \cite{TobGolDoe2017}. We consider a general framework and apply our results to the incompressible Navier-Stokes equations, which is a fundamental model of fluid flows. Both two- and three-dimensional Navier-Stokes equations are considered. Other models will be addressed in a forthcoming work \cite{RTinpreparation}.

The result in \cite{TobGolDoe2017} concerns the \emph{optimality} of a minimax-type estimate for \emph{the asymptotic time average} of quantities defined on \emph{individual solutions} of the system. Their proof also shows that this optimality holds for averages with respect to \emph{invariant measures} of the associated dynamical system.

The work in \cite{TobGolDoe2017} follows a series of recent papers in which similar estimates are used, in combination with semidefinite programming \cite{Parrilo2003}, to obtain Lyapunov functions for dynamical systems and address many dynamical-system problems in a computationally tractable way, as long as the equations are given in terms of polynomial functions. We refer the reader to the review article \cite{CheGouHuaPa2014}, which has many applications of this technique, including upper bound estimates (not necessarily optimal) associated with a number of fluid flow problems (based on the finite-dimensional system obtained from the Galerkin approximation of the Navier-Stokes equations).

Consider a differential equation $u_t = F(u)$ generating a semiflow $(t, u_0) \mapsto u(t;u_0)$, defined on a phase space $X$ and for times $t\geq 0$, where $u(t,u_0)$ is the unique global solution starting at $u(0;u_0) = u_0$. Suppose we want to estimate the asymptotic time average of a given quantity $\phi : X \rightarrow \RR$, for solutions starting at a given compact invariant set $B\subset X$. The time average may not converge, but we may look for an upper bound for its superior limit:
\begin{equation}
  \label{limsuptimeave}
  \sup_{u_0\in B} \limsup_{T\rightarrow \infty} \frac{1}{T}\int_0^T \phi(u(t; u_0)) \;\rmd t.
\end{equation}

The upper bound estimates for \eqref{limsuptimeave} given in \cite{CheGouHuaPa2014} and in the references therein are obtained by adding to the desired quantity an auxiliary term and then minimizing the result over the auxiliary functions used to generate this term. Notice that, in the finite-dimensional case, for any $\Psi\in \Ccal^1(B)$, which stands for the space of real-valued continuously differentiable functions defined on $B$, we have
\begin{equation}
  \label{introdphidt}
  \frac{\rmd}{\rmd t}\Psi(u(t;u_0)) = \dual{F(u(t;u_0)), \Psi'(u(t;u_0))}.
\end{equation}
Since $B$ is compact and invariant, the function $\Psi(u(t;u_0))$ is bounded, and we have, in view of \eqref{introdphidt},
\[ \limsup_{T\rightarrow \infty} \frac{1}{T}\int_0^T \dual{F(u(t;u_0)), \Psi'(u(t;u_0))} \;\rmd t = 0,
\]
for an arbitrary $\Psi\in \Ccal^1(B)$. This yields the identity
\begin{multline}
 \label{individual_bound_intro}
 \sup_{u_0\in B} \limsup_{T\rightarrow \infty} \frac{1}{T}\int_0^T \phi(u(t; u_0)) \;\rmd t \\
    = \sup_{u_0\in B} \inf_{\Psi\in \Ccal^1(B)} \limsup_{T\rightarrow \infty} \frac{1}{T}\int_0^T \left\{\phi(u(t; u_0)) + \dual{F(u(t;u_0)), \Psi'(u(t;u_0))}\right\}\;\rmd t.
\end{multline}

Since $B$ is invariant, we bound the limsup in \eqref{individual_bound_intro} by the maximum of the integrand on the compact set $B$, obtaining
\begin{equation}
 \label{leq_bound_intro}
 \sup_{u_0\in B} \limsup_{T\rightarrow \infty} \frac{1}{T}\int_0^T \phi(u(t; u_0)) \;\rmd t \leq  \inf_{\Psi\in \Ccal^1(B)} \max_{u_0\in B} \left\{ \phi(v) + \dual{F(v), \Psi'(v)}  \right\}.
\end{equation}

The remarkable result proved by Tobasco, Goluskin and Doering in \cite{TobGolDoe2017} is that the estimate \eqref{leq_bound_intro} is, in fact, optimal, which is obtained by establishing the identity
\begin{equation}
 \label{optimal_intro}
 \max_{u_0\in B} \limsup_{T\rightarrow \infty} \frac{1}{T}\int_0^T \phi(u(t; u_0)) \;\rmd t = \inf_{\Psi\in \Ccal^1(B)} \max_{u_0\in B} \left\{ \phi(v) + \dual{F(v), \Psi'(v)}  \right\}.
\end{equation}

There are two crucial steps in their result. One is that the maximum on the left hand side of \eqref{optimal_intro} can be written in terms of an ergodic measure. After this step, it is not difficult to write a version of \eqref{individual_bound_intro} with Borel probability measures instead of a limsup of time averages. Then, the second crucial step is to use a suitable minimax theorem, namely Sion's minimax theorem (see Theorem \ref{thmsionminimax} below), to prove the identity in the passage from max-inf to inf-max, where the maximum is over Borel probability measures. After that, it is straightforward to use that the maximum over Borel probability measures is achieved on Dirac delta measures, hence corresponding to the maximum over points in $B$.

A related problem is to find the orbit $u(t;u_0^*)$ (or orbits), or the associated invariant measure $\mu^*$ (or measures), generated by the limit of the time averages of this orbit, that attains the maximum above. This is a topic in Ergodic Optimization that started in the early 1990's. See Jenkinson's paper \cite{Jenkinson2019}.

The difficulty of extending this to infinite-dimensions is three-fold: the compactness of $B$ needed in the representation theorem associating asymptotic time averages with invariant measures; the compactness of $B$ needed in the application of Sion's Minimax Theorem; and the equivalence between the classical notion of (Lagrange) invariance of a measure (i.e. $\mu(S(t)^{-1}E)=\mu(E)$, for any measurable set $E$ and any $t\geq 0$) and an Eulerian-type invariance (i.e. $\int_B \dual{F(v),\Psi'(v)}\;\rmd\mu(v)=0$, for any $\Psi\in \Ccal^1(B)$).

The compactness of the starting set $B$ can be avoided by the assumption of the system to be dissipative, in the sense of the existence of a compact pointwise attracting set for the system. And the equivalence between the two notions of invariance holds provided the system can be approximated by a continuously differentiable left-invertible semigroup of operators with suitable properties. These conditions are expected to hold for many infinite-dimensional systems, as illustrated here with the two-dimensional Navier-Stokes equations.

The difficulty of extending \eqref{optimal_intro} to the three-dimensional Navier-Stokes equations, however, is of a different nature. It lies on the fact that the three-dimensional version of this system is not known to be well-posed. There is no known semigroup at our disposal. In order to overcome such a difficulty, we resort to the concept of stationary statistical solution of the Navier-Stokes equations, which was introduced by Foias and Prodi \cite{Foias72, Foias73, FoiasProdi1976}. In this case, we obtain some partial results. We obtain a possibly non-optimal minimax formula for \emph{Foias-Prodi stationary statistical solutions} and for the limsup of time averages of Leray-Hopf weak solutions. We also consider in this article a wider class of statistical solutions that we term \emph{weak stationary statistical solution}, and in this case we prove an optimal minimax estimate as above.

We end the introduction by summarizing our main results:
\begin{enumerate}
  \item \label{summaryextendtoinfdim} We extend the optimal minimax formula to a general infinite-dimensional setting that applies to dissipative continuous semigroups where the main ingredients are the existence of a compact pointwise attracting set and the equivalence between the classical notion of (Lagrangian-type) invariant measure and a suitable Eulerian-type notion;
  \item We apply our optimality result to invariant measures and asymptotic time averages of individual solutions of the two-dimensional Navier-Stokes equations;
  \item We apply our optimality result to a weak version of stationary statistical solutions of the three-dimensional Navier-Stokes equations;
  \item We obtain a possibly non-optimal upper bound formula for the Foias-Prodi stationary statistical solutions and for the time averages of Leray-Hopf weak solutions of the three-dimensional Navier-Stokes equations.
\end{enumerate}

In a forthcoming work \cite{RTinpreparation}, we go one step further in item \eqref{summaryextendtoinfdim} above by addressing conditions for which the Lagrangian and Eulerian notions of invariant measure coincide and then by obtaining the validity of the minimax optimality result in a variety of other examples of infinite-dimensional equations.

\section{Mathematical framework}

We work with two different frameworks, a more general one for \emph{(weak) stationary statistical solutions} and a more specific one for asymptotic time averages of orbits of \emph{semigroups}. The two concepts are explained in detail in this section.

In both of them, we let $X$ be a \emph{Hausdorff topological space} and let $W$ be a \emph{Banach space} with a \emph{continuous inclusion} $X\subset W'$. Typically, $W$ is continuously included in $X$. The norm in $W$ is denoted by $\|\cdot\|_W$, and the duality product between $W$ and $W'$, by $\average{\cdot,\cdot}_{W',W}$. Finally, we let $F : X \rightarrow W'$ be a \emph{Borel map} and consider the \emph{differential equation}
\begin{equation}
  \label{diffeq}
  \frac{\rmd u}{\rmd t} = F(u).
\end{equation}

At this point, we are not discussing conditions under which the differential equation \eqref{diffeq} has global solutions or not, and whether these solutions are uniquely determined by the initial condition or not. These results are left to each application and depend on the structure of $F$ and on further properties of the functional spaces. For a number of examples, see, for instance, \cite{BabinVishik1992, SellYou2002, Temam1988}.

\subsection{Weak stationary statistical solutions}
\label{subsecsss}

For the first framework, we introduce the following definitions, stemming from \cite{Foias72,Foias73,vishikfursikov79, vishikfursikov88,FoiTem75, FMRT2001,FRT2019, BMR2016}.
\begin{defs}
  \label{defscyl}
  A \textbf{cylindrical test functional} on the dual $W'$ of a Banach space $W$ is any function $\Psi:W'\rightarrow \RR$ of the form
  \[ \Psi(u) = \psi(\dual{u,w_1}_{W',W}, \ldots, \dual{u,w_m}_{W', W}), \qquad \forall u\in W',
  \]
  where $w_1, \ldots, w_m\in W$, $m\in \NN$, and $\psi\in \Ccal_c^1(\RR^m)$. We denote the set of such test functions as $\Tcal^\cyl(W')$.
\end{defs}

As usual, $\Ccal_c^1(\RR^m)$, in the definition above, denotes the space of real-valued compactly supported continuously differentiable functions on $\RR^m$.

Any test function $\Psi\in \Tcal^\cyl(W')$ is Fr\'echet differentiable in $W'$, with the derivative given by
\[ \Psi'(u) = \sum_{j=1}^m \partial_j\psi(\average{u,w_1}_{W',W}, \ldots, \average{u,w_m}_{W', W})w_j \in W,
\]
where $\partial_j\psi$ is the partial derivative of $\psi$ with respect to the $j$-th variable. Notice that $\Psi'(u)$ belongs to $W$, and the map $u\mapsto \Psi'(u)$ is continuous from $W'$ into $W$.

If $W$ is dense in $X$ and $K$ is a compact set in $W'$, then it follows from the Stone-Weierstrass Theorem that the set $\left.\Tcal^\cyl(W')\right|_K$, obtained as the space of functionals which are the restriction to $K$ of a functional in $\Tcal^\cyl(W')$, is dense in the space $\Ccal(K)$. The density of the space $W$ in $X$ is needed in order to have that the algebra $\left.\Tcal^\cyl(W')\right|_K$ separates points, which is one of the requirements of the Stone-Weierstrass Theorem.

The set $\Tcal^\cyl(W')$ is a normed vector space with the norm
\begin{equation}
  \label{onenormforPhi}
  \|\Psi\|_1 = \sup_{u \in W'} \left(|\Psi(u)| + \|\Psi'(u)\|_W\right).
\end{equation}
As mentioned above, $\Psi'(u)$ belongs to $W$, so the definition above makes sense.

With the definition of cylindrical test functions at hand, we introduce the notion of weak stationary statistical solution.
\begin{defs}
  \label{defwsss}
  A \textbf{weak stationary statistical solution} of equation \eqref{diffeq} is a Borel probability measure $\mu$ on the space $X$ such that, for any cylindrical test function $\Psi\in \Tcal^\cyl(W')$, the map $u\mapsto \dual{F(u),\Psi'(u)}_{W', W}$ is $\mu$-integrable and
  \begin{equation}
    \label{ssseq}
    \int_X \dual{F(v),\Psi'(v)}_{W', W} \;\rmd \mu(v) = 0.
  \end{equation}
  Given a Borel subset $E$, we denote by $\Pcal_\wsss(E)$ the set of all weak stationary statistical solutions which are carried by $E$, i.e. $\mu(X\setminus E) = 0$.
\end{defs}

The term \emph{weak} in the definition above is akin to that of weak solutions of differential equations. In specific problems, further conditions can be added, giving rise to a subset of weak stationary statistical solutions. This is the case in the three-dimensional Navier-Stokes equations, in which the stationary statistical solutions are carried by a more regular space and they enjoy a mean energy-type inequality, leading to the definition of Foias-Prodi stationary statistical solution \cite{Foias73, FMRT2001, FRT2019}, akin to that of Leray-Hopf weak solution. See Section \ref{secweakandfpsss} for more details in the case of the Navier-Stokes equations.

\subsection{Continuous semigroups}
\label{subsecsemigroups}

We recall the definition of a continuous semigroup.
\begin{defs}
  A \textbf{continuous semigroup} on a Hausdorff topological space $X$ is a family $\{S(t)\}_{t\geq 0}$ of operators in $X$ satisfying
\begin{enumerate}
  \item $(t,u) \mapsto S(t)u$ is a continuous map from $[0,\infty)\times X$ into $X$;
  \item $S(0)$ is the identity in $X$;
  \item $S(t+s) = S(t)S(s)$, for all $t,s\geq 0$;
\end{enumerate}
  We say that the continuous semigroup is \textbf{associated with the equation} \eqref{diffeq} when each solution $u(t) = S(t)u_0$, $t\geq 0$, $u_0\in X$, solves the equation \eqref{diffeq} in $W'$. This means that, for each $w\in W$, $t\rightarrow \average{u(t),w}_{W',W}$ is continuously differentiable from $[0,\infty)$ into $\RR$, and
\[ \frac{\rmd}{\rmd t} \average{u(t),w}_{W',W} = \average{F(u(t)), w}_{W',W},
\]
for all $t\geq 0$.
\end{defs}

A set $B\subset X$ is called \textbf{positively invariant} for the semigroup when $S(t)u_0 \in B$, for all $t\geq 0$ and all $u_0\in B$.

Associated with a continuous semigroup, we recall the definition of an invariant measure.
\begin{defs}
  \label{definvmeas}
  An \textbf{invariant measure} for a continuous semigroup $\{S(t)\}_{t\geq 0}$ on a Hausdorff topological space $X$ is a Borel probability measure $\mu$ satisfying
  \[ \mu(S(t)^{-1}E) =\mu(E),
  \]
  for all Borel subsets of $X$ and all $t\geq 0$. Given a Borel subset $E$, we denote by $\Pcal_\inv(E)$ the set of all invariant measures which are carried by $E$, i.e. $\mu(X\setminus E) = 0$.
\end{defs}

Two other important notions are those of pointwise absorbing and attracting sets:
\begin{defs}
  \label{defabsattr}
  Let  $\{S(t)\}_{t\geq 0}$ be a continuous semigroup on a Hausdorff topological space $X$. Let $A$ and $B$ be two subsets of $X$. We say that $A$ \textbf{absorbs the points of} $B$ if, for each $u_0\in B$, there exists $T\geq 0$ such that $S(t)u_0\in A$, for all $t\geq T$. We say that $A$ \textbf{attracts the points of} $B$ if, for each $u_0\in B$, $S(t)u_0$ converges to $A$ in the sense that, for every open neighborhood $O$ of $A$, there exists $T>0$ such that $u(t)\in O$, for all $t\geq T$.
\end{defs}
Notice this is different from the usual notions of absorbing set and global attractor where the absorption and the attracting properties are \emph{uniform} with respect to the points in $B$ (see e.g. \cite{BabinVishik1992, SellYou2002, Temam1988}).

\section{Mathematical background}

Here, we recall a few classical results in Functional Analysis, Measure Theory and Ergodic Theory, based on \cite{AliBor2006, CoFoSi1982, DunSch1958, EkeTem1976, Komiya1988, Krengel1985, Rudin1987, Simon2011, Sion1958, Walters1982} and other references cited below.

\subsection{Generalized limits}
\label{secgenlim}

Let $\Bcal([0,\infty))$ be the set of all bounded real-valued functions on $[0,\infty)$. A \textbf{generalized limit}, denoted $\Lim$, is any positive linear functional on $\Bcal([0,\infty))$ that extends the usual limit. It is characterized by the following properties:
\begin{enumerate}
  \item $\Lim$ is a linear map from $\Bcal([0,\infty))$ into $\RR$;
  \item $\Lim g = \lim_{T\rightarrow \infty} g(T)$, for every $g\in\Bcal([0,\infty))$ such that the classical limit exists;
  \item $\Lim g \geq 0$, for all $g\in \Bcal([0,\infty))$ with $g\geq 0$.
\end{enumerate}

The existence of generalized limits is guaranteed by the Hahn-Banach Theorem, by extending the classical limit to the whole space $\Bcal([0,\infty))$, using $p(g) = \limsup g$ as a gauge function (see e.g. \cite{DunSch1958, FMRT2001}).

Any such generalized limit has the property of being invariant for time-translations of time averages of functions in $L^\infty(0,\infty)$, i.e.
\begin{equation}
  \label{timeaveinvgenlim}
  \Lim \frac{1}{T}\int_0^T f(t+\tau)\;\rmd t = \Lim \frac{1}{T}\int_0^T f(t)\;\rmd t, 
\end{equation}
for all $\tau \geq 0$, for any $f\in L^\infty(0,\infty)$ (see \cite[Appendix IV.A.2]{FMRT2001}).

From the condition (iii), it follows that
\[ \liminf_{T\rightarrow \infty} g(T) \leq \Lim g \leq \limsup_{T\rightarrow \infty} g(T),
\]
for any $g\in\Bcal([0,\infty))$.

Moreover, given any $g_0$ for which the usual limit does not exist and given any sequence $t_n\rightarrow \infty$ for which $g_0(t_n)$ converges to a certain value $\ell_0$ (e.g., the limsup or liminf), we can enforce that $\Lim g_0 = \ell_0$ (see \cite[Remark IV.1.5]{FMRT2001}).

\subsection{The topology of the space of probability measures}
\label{secprobspaces}

Given a topological space $X$, we consider the space $\Pcal(X)$ of Borel probability measures on $X$ endowed with the standard weak-star topology, which is the smallest topology for which the mappings $u \mapsto \int_X \varphi(u)\;\rmd\mu(u)$ are continuous, for all $\varphi$ in the space $\Ccal_b(X)$ of continuous and bounded real-valued functions on $X$. If a net $(\mu_\alpha)_\alpha$ converges to $\mu$ in this topology, we write $\mu_\alpha \rightharpoonup \mu$, bearing in mind that convergence in this topology means that $\int_X \varphi(u)\;\rmd\mu_\alpha(u)\rightarrow \int_X \varphi(u)\;\rmd\mu(u)$, for every such $\varphi$.

In such a generality, $\Pcal(X)$ is not well behaved. In particular, even if $X$ is a regular space,  $\Pcal(X)$ may not be a Hausdorff space, in the sense that we may have two different measures $\mu_1$ and $\mu_2$ for which $\int_X \varphi(u)\;\rmd\mu_1(u) = \int_X \varphi(u)\;\rmd\mu_2(u)$ for every $\varphi\in\Ccal_b(X)$. If, however, $K$ is a metrizable, or compact, subspace of $X$, then $\Pcal(K)$ is a Hausdorff space \cite[Section 15.1]{AliBor2006}.

Concerning compactness, if $K$ is a metrizable space, it follows from \cite[Theorem 15.11]{AliBor2006} that $\Pcal(K)$ is compact if and only if $K$ is a compact (see also the related Prohorov Theorem \cite{Prohorov1956}).

\subsection{Time-average measures}
\label{sectimeave}

Let $B$ be a closed subset of a Hausdorff topological space $X$ and assume $B$ is a normal topological space (i.e. any two disjoint closed sets can be separated by disjoint open neighborhoods) with the topology inherited from $X$. Let $u:[0,\infty)\rightarrow B$ be a continuous mapping and suppose there exists a compact subset $K$ of $X$ that attracts $u$ in the same sense as in Definition \ref{defabsattr}, i.e. for every open neighborhood $O$ of $K$, there exists $T>0$ such that $u(t)\in O$, for all $t\geq T$.

For any $\varphi\in \Ccal_b(B)$, the boundedness of $\varphi$ implies the boundedness of the time averages
\[ T \mapsto \frac{1}{T} \int_0^T \varphi(u(t))\;\rmd t.
\]

From this boundedness and the discussions in Section \ref{secgenlim}, it follows that the generalized limit of these time averages is well defined:
\[ \Lim \frac{1}{T} \int_0^T \varphi(u(t))\;\rmd t.
\]

Since $B$ is closed and $K$ is compact, the set $K\cap B$ is a compact set. Given $\varphi\in \Ccal(K\cap B)$, since $K\cap B$ is compact and $B$ is normal, it follows from Tietze's Extension Theorem \cite[Corollary I.5.4]{DunSch1958} that there exists a continuous and bounded function $\tilde\varphi\in \Ccal_b(B)$ such that $\tilde\varphi(u) = \varphi(u)$, for all $u\in K\cap B$. If $\varphi_1$ and $\varphi_2$ are two such extensions, we have $\varphi_1 - \varphi_2 = 0$ on $K\cap B$. Then, by the compactness of $K\cap B$ and the continuity of $\varphi_1 - \varphi_2$, given any $\varepsilon > 0$, there exists an open neighborhood $O\supset K\cap B$ such that
\[ |\varphi_1(u) - \varphi_2(u)| < \varepsilon, \qquad \forall u \in O.
\]
Since $K$ attracts $u$, there exists $T_0>0$ such that $u(t) \in O$, for all $t\geq T_0$. This implies that, using the linearity of the generalized limit,
\begin{multline*}
   \left|\Lim \frac{1}{T} \int_0^T \varphi_1(u(t))\;\rmd t - \Lim \frac{1}{T} \int_0^T \varphi_2(u(t))\;\rmd t\right| \\
    \leq \limsup_{T\rightarrow \infty} \frac{1}{T} \int_0^T |\varphi_1(u(t)) - \varphi_2(u(t))| \;\rmd t<\varepsilon.
\end{multline*}
Since $\varepsilon > 0$ is arbitrary, this means that the generalized limit is independent of the extension. This allows us to define a continuous linear function $\Lambda : \Ccal(K\cap B) \rightarrow \mathbb{R}$ by
\[ \Lambda(\varphi) = \Lim \frac{1}{T} \int_0^T \tilde\varphi(u(t))\;\rmd t,
\]
where $\tilde\varphi\in \Ccal_b(B)$ is any extension of $\varphi$ to $\Ccal_b(B)$.

If $\varphi\geq 0$ on $K\cap B$, Tietze's Extension Theorem guarantees that there is an extension $\tilde\varphi$ of $\varphi$ that is also nonnegative, i.e. $\tilde\varphi\geq 0$ on $B$. Thus, $\Lambda$ is a positive continuous linear function on $\Ccal(K\cap B)$. Hence, by the Kakutani-Riesz Representation Theorem \cite{Rudin1987}, it is represented by a Borel probability measure on $K\cap B$, which we denote by $\mu$. Thus,
\[ \Lim \frac{1}{T} \int_0^T \tilde\varphi(u(t))\;\rmd t = \Lambda(\varphi) = \int_{K\cap B} \varphi(v) \;\rmd\mu(v),
\]
for all $\varphi\in \Ccal(K\cap B)$ and for any extension $\tilde\varphi \in \Ccal_b(B)$.

On a different perspective, if $\varphi\in \Ccal_b(B)$ is given, its restriction $\varphi|_{K\cap B}$ belongs to $\Ccal(K\cap B)$, so that
\[ \Lim \frac{1}{T} \int_0^T \varphi(u(t))\;\rmd t = \Lambda(\varphi|_{K\cap B}) = \int_{K\cap B} \varphi(v) \;\rmd\mu(v).
\]

Extending $\mu$ from $K\cap B$ to $X$ in the usual way (i.e. $\mu(E) = \mu(E\cap (K\cap B))$, for all Borel sets $E$ in $X$), we find that $\mu$ is a Borel probability measure in $X$ which is carried by $K\cap B$ and is such that
\begin{equation}
  \label{genlimandmu}
  \Lim \frac{1}{T} \int_0^T \varphi(u(t))\;\rmd t = \int_X \varphi(v) \;\rmd\mu(v), \quad \forall \varphi\in \Ccal_b(B).
\end{equation}

Given a particular $\phi\in \Ccal_b(B)$ and a particular sequence $T_n\rightarrow \infty$ such that the time averages of $\phi$ over $[0,T_n]$ converge to a limit, such as the limsup or the liminf, then, just as in Section \ref{secgenlim}, we may choose the generalized limit that yields the limit of the averages with $T_n\rightarrow \infty$. We are particularly interested in the limsup. Hence, given a particular $\phi$, we choose a generalized limit that yields the limsup, and with the associated measure still denoted by $\mu$. In this case, we have
\begin{equation}
  \label{genlimparticularphi}
  \int_X \phi(v) \;\rmd\mu(v) = \Lim \frac{1}{T} \int_0^T \phi(u(t))\;\rmd t = \limsup_{T\rightarrow \infty} \frac{1}{T} \int_0^T \phi(u(t))\;\rmd t,
\end{equation}
for this particular $\phi$, with
\begin{multline}
  \label{genlimarbitraryvarphi}
  \liminf_{T\rightarrow \infty} \frac{1}{T} \int_0^T \varphi(u(t))\;\rmd t \leq \int_K \varphi(v) \;\rmd\mu(v) = \Lim \frac{1}{T} \int_0^T \varphi(u(t))\;\rmd t \\
   \leq \limsup_{T\rightarrow \infty} \frac{1}{T} \int_0^T \varphi(u(t))\;\rmd t,
\end{multline}
for an arbitrary $\varphi\in \Ccal_b(B)$.

\begin{rmk}
  We use the two notations $\phi$ and $\varphi$ to distinguish a given particular $\phi$ for which \eqref{genlimparticularphi} holds and an arbitrary $\varphi$ for which \eqref{genlimandmu} and \eqref{genlimarbitraryvarphi} hold but \eqref{genlimparticularphi} may not hold. In our main results, $\phi$ is a given function for which we want to bound the time or ensemble averages associated with the system, while $\varphi$ is an auxiliary function used to define mathematical objects (such as the operator $\Lambda$ above) and prove some of our results.
\end{rmk}

We summarize the results of this section as follows.

\begin{lem}
  \label{kattractingtimeavemeas}
  Let $X$ be a Hausdorff topological space and let $B$ be a closed subset of $X$. Assume $B$ is a normal topological space with the topology inherited from $X$. Let $u:[0,\infty)\rightarrow B$ be a continuous mapping and suppose there exists a compact subset $K$ of $X$ that attracts $u$. Finally, let $\Lim$ be a generalized limit. Then, there exists a Borel measure $\mu$ on $X$ which is carried by $K\cap B$ and satisfies \eqref{genlimandmu} and \eqref{genlimarbitraryvarphi}. Moreover, given $\phi\in\Ccal_b(B)$, both the generalized limit and the measure $\mu$ can be chosen to also satisfy \eqref{genlimparticularphi}.
\end{lem}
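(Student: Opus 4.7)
The plan is to construct the measure $\mu$ as the Kakutani--Riesz representation of a positive linear functional obtained from the generalized limit of time averages, and then verify the three displayed identities one at a time. I would proceed in four stages, essentially formalizing the computation already sketched in this section.

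First, I would fix $\varphi \in \Ccal_b(B)$ and note that $t\mapsto \varphi(u(t))$ is a bounded continuous real-valued function, so the time averages $T\mapsto (1/T)\int_0^T \varphi(u(t))\,\rmd t$ lie in $\Bcal([0,\infty))$ and the generalized limit $\Lim (1/T)\int_0^T \varphi(u(t))\,\rmd t$ is well defined. Second, I would pass to the compact set $K\cap B$: given $\varphi\in \Ccal(K\cap B)$, I would use Tietze's Extension Theorem (which applies because $B$ is normal and $K\cap B$ is closed in $B$) to extend $\varphi$ to some $\tilde\varphi\in \Ccal_b(B)$ and set $\Lambda(\varphi) := \Lim (1/T)\int_0^T \tilde\varphi(u(t))\,\rmd t$. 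The key verification is that $\Lambda$ does not depend on the extension: if $\tilde\varphi_1$ and $\tilde\varphi_2$ agree on $K\cap B$, then by continuity and compactness, for every $\varepsilon>0$ there is an open neighborhood $O\supset K\cap B$ on which $|\tilde\varphi_1-\tilde\varphi_2|<\varepsilon$; since $K$ attracts $u$, we have $u(t)\in O$ for all large $t$, and the sandwich bound $\liminf \leq \Lim \leq \limsup$ forces $|\Lambda(\tilde\varphi_1)-\Lambda(\tilde\varphi_2)|\leq\varepsilon$. Taking $\varepsilon\to 0$ gives well-definedness.

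Third, I would observe that $\Lambda$ is linear (by linearity of the generalized limit), positive (by positivity of the generalized limit together with the fact, again via Tietze, that $\varphi\geq 0$ on $K\cap B$ admits a nonnegative extension), and satisfies $\Lambda(1) = 1$. The Kakutani--Riesz Representation Theorem on the compact Hausdorff space $K\cap B$ then yields a unique Borel probability measure $\mu$ on $K\cap B$ with $\Lambda(\varphi) = \int_{K\cap B}\varphi\,\rmd\mu$ for all $\varphi\in\Ccal(K\cap B)$. Extending $\mu$ to $X$ by $\mu(E) := \mu(E\cap(K\cap B))$ gives a Borel probability measure on $X$ carried by $K\cap B$. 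Applying this identity with the restriction $\varphi|_{K\cap B}$ of an arbitrary $\varphi\in\Ccal_b(B)$ yields \eqref{genlimandmu}, and the sandwich inequality for generalized limits then immediately gives \eqref{genlimarbitraryvarphi}.

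Finally, for the last statement about a prescribed $\phi\in\Ccal_b(B)$, I would invoke the freedom in choosing the generalized limit recalled at the end of Section \ref{secgenlim}: the function $g(T) = (1/T)\int_0^T\phi(u(t))\,\rmd t$ is bounded, so we can pick a sequence $T_n\to\infty$ along which $g(T_n)$ converges to $\limsup_{T\to\infty}g(T)$ and choose $\Lim$ to realize this value. Applying the above construction with this particular generalized limit produces a measure $\mu$ satisfying \eqref{genlimparticularphi} in addition to \eqref{genlimandmu} and \eqref{genlimarbitraryvarphi}. The main subtlety, and thus the part I would be most careful about, is the well-definedness of $\Lambda$ independent of the Tietze extension; everything else is a fairly mechanical assembly of the Hahn--Banach construction of the generalized limit, Tietze, and Kakutani--Riesz.
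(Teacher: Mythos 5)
Your proposal is correct and follows essentially the same route as the paper's own argument: generalized limit of time averages, Tietze extension from $K\cap B$ with the attraction property guaranteeing independence of the extension, Kakutani--Riesz representation on the compact set $K\cap B$, and the choice of generalized limit realizing the limsup for the prescribed $\phi$. You correctly identify the well-definedness of $\Lambda$ as the only nontrivial step, and your handling of it matches the paper's.
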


\subsection{Extreme points and Dirac measures}
\label{secextremeanddirac}

If $Y$ is a separable and metrizable topological space, it follows from \cite[Theorem 15.9]{AliBor2006} that the set of extreme points of $\Pcal(Y)$ is the set of Dirac probability measures $\delta_u$, with $u\in Y$. Since any compact and metrizable space is separable, this holds, in particular, for a compact and metrizable space $K$.

This result is usually used in combination with minimization or maximization problems. In this case, the Bauer Maximum Principle says that if $K$ is a compact convex subset of a locally convex Hausdorff space, then every upper semicontinuous convex function on $K$ has a maximum that is an extreme point (see \cite[7.69 Bauer Maximum Principle]{AliBor2006}).

\subsection{Ergodic theorem}

We state below a version of the Birkhoff-Khinchin Ergodic Theorem as it applies to our context. Here, we consider the space $\Pcal_\inv(X)$ of invariant measures given in Definition \ref{definvmeas}.
\begin{thm}[Birkhoff-Khinchin Ergodic Theorem]
  \label{thmbirkhiergodic}
  Consider a Hausdorff topological space $X$ and a continuous semigroup $\{S(t)\}_{t\geq 0}$ on $X$. 
  Let $\mu\in \Pcal_\inv(X)$ and let $\varphi\in L^1(\mu)$. Then, for $\mu$-almost every $u\in X$, the limit
  \[ \lim_{T\rightarrow \infty} \frac{1}{T}\int_0^T \varphi(S(t) u)\;\rmd t
  \] 
  exists.
\end{thm}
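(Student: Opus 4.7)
The plan is to reduce the continuous-time statement to the discrete-time Birkhoff ergodic theorem applied to the time-one map $T = S(1)$, which preserves $\mu$ by Definition \ref{definvmeas}. The first step is to check measurability. Since $(t,u)\mapsto S(t)u$ is jointly continuous from $[0,\infty)\times X$ into $X$ and $\varphi$ is Borel measurable, the map $(t,u)\mapsto \varphi(S(t)u)$ is Borel measurable on $[0,\infty)\times X$. Invariance of $\mu$ under each $S(t)$ gives $\int_X |\varphi(S(t)u)|\,\rmd\mu(u) = \|\varphi\|_{L^1(\mu)}$, so by Tonelli the function
\[ \bar\varphi(u) \define \int_0^1 \varphi(S(t)u)\,\rmd t
\]
is defined for $\mu$-almost every $u$ and lies in $L^1(\mu)$ with $\|\bar\varphi\|_{L^1(\mu)}\leq \|\varphi\|_{L^1(\mu)}$.

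Next, I would invoke the classical discrete-time Birkhoff ergodic theorem for the measure-preserving transformation $T = S(1)$ applied to $\bar\varphi$: there exists $\bar\varphi^\ast\in L^1(\mu)$ such that for $\mu$-a.e. $u$,
\[ \frac{1}{N}\sum_{k=0}^{N-1}\bar\varphi(S(k)u) \longrightarrow \bar\varphi^\ast(u).
\]
A direct telescoping computation using the semigroup property and Fubini gives
\[ \sum_{k=0}^{N-1}\bar\varphi(S(k)u) = \sum_{k=0}^{N-1}\int_k^{k+1}\varphi(S(s)u)\,\rmd s = \int_0^N\varphi(S(s)u)\,\rmd s,
\]
so that $\frac{1}{N}\int_0^N\varphi(S(s)u)\,\rmd s \to \bar\varphi^\ast(u)$ along integer times, for $\mu$-a.e. $u$.

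To pass from integer times to continuous times, write $T = N+r$ with $N=\lfloor T\rfloor$ and $r\in[0,1)$, and split
\[ \frac{1}{T}\int_0^T\varphi(S(s)u)\,\rmd s = \frac{N}{T}\cdot\frac{1}{N}\int_0^N\varphi(S(s)u)\,\rmd s + \frac{1}{T}\int_N^T \varphi(S(s)u)\,\rmd s.
\]
The first term converges to $\bar\varphi^\ast(u)$ since $N/T\to 1$. For the second, the bound
\[ \left|\frac{1}{T}\int_N^T\varphi(S(s)u)\,\rmd s\right| \leq \frac{1}{N}\int_N^{N+1}|\varphi(S(s)u)|\,\rmd s = \frac{\overline{|\varphi|}(S(N)u)}{N}
\]
reduces the problem to showing $\overline{|\varphi|}(S(N)u)/N \to 0$ for $\mu$-a.e. $u$. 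But applying the discrete ergodic theorem to $\overline{|\varphi|}\in L^1(\mu)$ shows that $\frac{1}{N}\sum_{k=0}^{N-1}\overline{|\varphi|}(S(k)u)$ converges $\mu$-a.e., which in particular forces its $N$-th summand divided by $N$ to vanish. This completes the proof.

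The most delicate step I would expect is the combination of measurability and integrability needed to justify $\bar\varphi\in L^1(\mu)$ cleanly in this Hausdorff (not necessarily metrizable) setting; once that is secured, the remaining work is the standard integer-to-real comparison.
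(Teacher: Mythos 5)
The paper does not prove this theorem: it states it as a classical result and points to Cornfeld--Fomin--Sinai, Krengel, and the adaptation in \cite{FRT2015} of the Dunford--Schwartz semigroup version. Your argument is the standard reduction of the continuous-time statement to the discrete-time Birkhoff theorem for the time-one map $S(1)$, which is essentially what those references do, and it is correct: the telescoping identity $\sum_{k=0}^{N-1}\bar\varphi(S(k)u)=\int_0^N\varphi(S(s)u)\,\rmd s$ is right, and your disposal of the remainder term via $\overline{|\varphi|}(S(N)u)/N\to 0$ (obtained by differencing consecutive Ces\`aro averages of $\overline{|\varphi|}$) is the correct way to pass from integer to real times. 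The one point you rightly flag as delicate is the joint measurability needed for Tonelli: continuity of $(t,u)\mapsto S(t)u$ gives Borel measurability with respect to the Borel $\sigma$-algebra of the product \emph{topology}, which in a non-second-countable Hausdorff space can be strictly larger than the product $\sigma$-algebra $\Bcal([0,\infty))\otimes\Bcal(X)$ that Tonelli requires; one also needs to fix a Borel representative of $\varphi\in L^1(\mu)$ and use $\mu(S(t)^{-1}N)=\mu(N)=0$ to see that the a.e.\ class of $\varphi\circ S(t)$ is well defined. This is exactly the level of care the paper itself uses in the proof of Proposition \ref{propinvimplieswsss} (where Borel on $[0,t]\times K$ is asserted to imply $\lambda\times\mu$-measurability), and it is harmless in the paper's applications, where $\mu$ is carried by a compact metrizable set. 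So your proof stands as a self-contained substitute for the citation.
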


For a proof of the Birkhoff-Khinchin Ergodic Theorem in the context of semiflows, see \cite[Ch 1, \S 2, Theorem 1]{CoFoSi1982} or \cite[Section 1.2]{Krengel1985}, or see our earlier work \cite[Section 3]{FRT2015}, where the result is adapted from the version for semigroups given in \cite[Theorem VIII.7.5]{DunSch1958}.

Another important result is that the extreme points of invariant measures are ergodic. This result can be found for discrete maps or continuous groups in several works, e.g. \cite[Theorem 9.13]{Walters1982}, \cite[Chapter 8]{Simon2011}, \cite[Theorem 19.25]{AliBor2006}. The proof is not complicated and can be easily adapted to our context.

Indeed, consider the case where $K$ is compact and metrizable. In this case, $\Pcal(K)$ is also compact and metrizable. Since the semigroup is continuous, it is straightforward to deduce that the subset $\Pcal_\inv(K)$ of invariant probability measures is closed in $\Pcal(K)$, hence compact. Now, $\mu\in \Pcal_\inv(K)$ is not an extreme point in $\Pcal_\inv(K)$ if and only if $\mu$ can be decomposed as $\mu = \theta \mu_1 + (1-\theta)\mu_2$, with $\mu_1, \mu_2\in \Pcal_\inv(K)$ and $0<\theta <1$, which is equivalent to saying that $\mu$ is not ergodic. In other words, $\mu$ is ergodic if and only if it is an extreme point in $\Pcal_\inv(K)$. 

In case $f$ is an upper semicontinuous convex function on $\Pcal_\inv(K)$, then it follows from the Bauer Maximum Principle (see Section \ref{secextremeanddirac}) that there exists a maximum $\mu^*$ of $f$ which is an extreme point of $\Pcal_\inv(K)$ and, hence, it is ergodic.

With $\mu^*$ ergodic, and using the Birkhoff Ergodic Theorem \cite[Theorem 1.14]{Walters1982} for ergodic measures, we find that the limit in Theorem \ref{thmbirkhiergodic} is constant and is equal to the mean value of $\varphi$ with respect to the measure $\mu^*$.

Thus, we have obtained the following classical results.
\begin{thm}
  \label{thmextremeinvariantmeasareergodic}
  Let $\{S(t)\}_{t\geq 0}$ be a continuous semigroup on a Hausdorff topological space $X$. Suppose $K$ is a compact and metrizable subspace of $X$. Then, $\Pcal_\inv(K)$ is compact, and a measure $\mu$ in $\Pcal_\inv(K)$ is ergodic if and only if $\mu$ is an extreme point in $\Pcal_\inv(K)$. Moreover, if $f$ is an upper semicontinuous convex function on $\Pcal_\inv(K)$, then it has a maximum at some $\mu^*$ which is an extreme point in $\Pcal_\inv(K)$ and, hence, it is ergodic. Being ergodic, we have that
  \[ \lim_{T\rightarrow \infty} \frac{1}{T}\int_0^T \varphi(S(t) u)\;\rmd t = \int_K \varphi(u)\;\rmd\mu^*(u),
  \]
  for $\mu^*$-almost every $u\in K$.
\end{thm}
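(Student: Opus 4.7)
The plan is to assemble the statement from the four classical ingredients already collected in Sections \ref{secprobspaces}--\ref{secextremeanddirac}, in the order of the assertions themselves.

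First I would establish that $\Pcal_\inv(K)$ is compact. Since $K$ is compact and metrizable, by the discussion in Section \ref{secprobspaces} the space $\Pcal(K)$ is itself compact (and metrizable, hence Hausdorff). It then suffices to prove that $\Pcal_\inv(K)$ is closed in $\Pcal(K)$. For this I would fix $t\geq 0$ and $\varphi\in \Ccal(K)$ and observe that, by continuity of $S(t):K\rightarrow K$, the composition $\varphi\circ S(t)$ is again in $\Ccal(K)$; hence the functional $\mu\mapsto \int_K \varphi(S(t)u)\,\rmd\mu(u) - \int_K \varphi(u)\,\rmd\mu(u)$ is continuous on $\Pcal(K)$ for the weak-star topology. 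The set $\Pcal_\inv(K)$ is the intersection over all such $(t,\varphi)$ of the zero sets of these continuous functionals (using that invariance of a Borel probability measure is equivalent to the pushforward identity $\int \varphi\circ S(t)\,\rmd\mu = \int \varphi\,\rmd\mu$ for every $\varphi\in\Ccal(K)$), and is therefore closed.

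Next I would prove the equivalence between extreme points of $\Pcal_\inv(K)$ and ergodicity, following the sketch given just before the statement. If $\mu\in\Pcal_\inv(K)$ is not ergodic, then there is a Borel invariant set $E$ with $0<\mu(E)<1$, and the decomposition $\mu = \mu(E)\,\mu_1 + (1-\mu(E))\,\mu_2$ with $\mu_1(\cdot)=\mu(\cdot\cap E)/\mu(E)$ and $\mu_2(\cdot)=\mu(\cdot\cap E^c)/\mu(E^c)$ exhibits $\mu$ as a nontrivial convex combination of two distinct measures in $\Pcal_\inv(K)$, so $\mu$ is not extreme. Conversely, if $\mu = \theta\mu_1 + (1-\theta)\mu_2$ with $\mu_1\neq\mu_2$ in $\Pcal_\inv(K)$ and $\theta\in(0,1)$, then $\mu_1$ is absolutely continuous with respect to $\mu$, and the standard argument (apply the Birkhoff--Khinchin theorem of Section 3.4 to the Radon--Nikodym derivative, and use that this derivative must be $S(t)$-invariant $\mu$-a.e.) produces a nontrivial invariant set of intermediate $\mu$-mass, so $\mu$ is not ergodic.

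Third, I would obtain the existence of an ergodic maximizer. Since $\Pcal_\inv(K)$ was just shown to be a compact convex subset of the locally convex Hausdorff space $\Ccal(K)^*$ with the weak-star topology, the Bauer Maximum Principle (Section \ref{secextremeanddirac}) applies to the upper semicontinuous convex function $f$ and yields a maximizer $\mu^*$ which is an extreme point of $\Pcal_\inv(K)$, hence ergodic by the previous step. Finally, with $\mu^*$ ergodic and $\varphi\in L^1(\mu^*)$, the ergodic version of the Birkhoff--Khinchin theorem (cited as \cite[Theorem 1.14]{Walters1982} in the excerpt) asserts that the time average $\tfrac{1}{T}\int_0^T \varphi(S(t)u)\,\rmd t$ converges for $\mu^*$-almost every $u\in K$ to a constant, which by dominated convergence and the invariance of $\mu^*$ must equal $\int_K \varphi\,\rmd\mu^*$, giving the stated equality.

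The main obstacle I anticipate is the closedness of $\Pcal_\inv(K)$ in $\Pcal(K)$: one has to be careful that weak-star convergence $\mu_\alpha\rightharpoonup\mu$ does preserve the invariance identity $\int \varphi\circ S(t)\,\rmd\mu = \int\varphi\,\rmd\mu$, and this relies crucially on the continuity of $S(t)$ as a map on $K$ (and on $K$ being mapped into itself, which uses compactness and the fact that one works inside $\Pcal(K)$). The other steps are routine adaptations of standard discrete-time arguments to the semiflow setting.
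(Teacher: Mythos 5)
Your proposal is correct and follows essentially the same route as the paper's own argument (the discussion immediately preceding the theorem): compactness of $\Pcal(K)$ and closedness of $\Pcal_\inv(K)$ in the weak-star topology, the identification of extreme points of $\Pcal_\inv(K)$ with ergodic measures via the two standard decomposition arguments, the Bauer Maximum Principle, and the ergodic case of the Birkhoff--Khinchin theorem. The only point worth flagging is that your closedness argument writes $S(t):K\rightarrow K$, tacitly assuming $K$ is positively invariant, which is not among the stated hypotheses; the paper's own sketch glosses over the same point, and it can be repaired by testing against restrictions to $K$ of functions in $\Ccal_b(X)$ (or is simply moot in the applications, where $K$ is invariant).
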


\subsection{Invariant measures and weak stationary statistical solutions}

In the context of Section \ref{subsecsemigroups}, an important ingredient for the optimality of the minimax estimate is that the notion of weak stationary statistical solution be equivalent to that of invariant measure. This, however, is a delicate issue, and it is not known whether it is true in general.

One direction is actually straightforward, namely, that any invariant measure is a weak stationary statistical solution provided $F$ is bounded on a carrier of the measure. Depending on the equation, this condition on $F$ may be relaxed (see Remark \ref{rmkrelaxcompctforinvissss}).

The converse, however, is not known to be true in such a generality. It has been proved, though, for the two-dimensional Navier-Stokes equations in \cite[Proposition 6.2]{Foias73} and for a globally modified three-dimensional Navier-Stokes equations obtained by truncating the nonlinear term in \cite[Theorem 15]{KloMRRea2009}. The proof relies on approximating the semigroup by a differentiable and left-invertible semigroup, and it depends also on involved estimates. As such, it can indeed be showed to hold for a large number of equations \cite{RTinpreparation}.

Below, we prove the direction that is valid in general, leaving the converse to each application.
\begin{prop}
  \label{propinvimplieswsss}
  In the framework of Sections \ref{subsecsss} and \ref{subsecsemigroups}, given a Borel subset $E$ of $X$ and assuming $F$ is bounded on $E$, it follows that $\Pcal_\inv(E)\subset\Pcal_\wsss(E)$.
\end{prop}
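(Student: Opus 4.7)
The plan is to start from a cylindrical test functional $\Psi\in\Tcal^\cyl(W')$, write it as $\Psi(u)=\psi(\dual{u,w_1}_{W',W},\ldots,\dual{u,w_m}_{W',W})$ with $\psi\in\Ccal_c^1(\RR^m)$ and $w_1,\ldots,w_m\in W$, and apply the chain rule to the composition $t\mapsto \Psi(S(t)u_0)$. Since the semigroup is associated with \eqref{diffeq}, each scalar component $t\mapsto\dual{S(t)u_0,w_j}_{W',W}$ is continuously differentiable with derivative $\dual{F(S(t)u_0),w_j}_{W',W}$, so the chain rule yields
\[
\frac{\rmd}{\rmd t}\Psi(S(t)u_0)=\sum_{j=1}^m\partial_j\psi(\dual{S(t)u_0,w_1}_{W',W},\ldots)\,\dual{F(S(t)u_0),w_j}_{W',W}=\dual{F(S(t)u_0),\Psi'(S(t)u_0)}_{W',W}.
\]
Integration over $[0,T]$ then gives the pointwise identity
\[
\Psi(S(T)u_0)-\Psi(u_0)=\int_0^T\dual{F(S(t)u_0),\Psi'(S(t)u_0)}_{W',W}\,\rmd t.
\]

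Next, I would integrate this identity against $\mu$ over $X$ and apply Fubini to swap the order of integration on the right. The left-hand side vanishes because $\mu$ is invariant and $\Psi$ is a bounded Borel function: $\int_X\Psi(S(T)v)\,\rmd\mu(v)=\int_X\Psi(v)\,\rmd\mu(v)$. For each fixed $t\geq 0$, invariance applied to the Borel function $g(v)=\dual{F(v),\Psi'(v)}_{W',W}$ gives
\[
\int_X\dual{F(S(t)v),\Psi'(S(t)v)}_{W',W}\,\rmd\mu(v)=\int_X\dual{F(v),\Psi'(v)}_{W',W}\,\rmd\mu(v),
\]
so after Fubini the right-hand side becomes $T\int_X\dual{F(v),\Psi'(v)}_{W',W}\,\rmd\mu(v)$. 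Dividing by $T>0$ yields the required identity \eqref{ssseq}.

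The technical point to verify is that the use of Fubini is legitimate. Here the hypothesis that $F$ is bounded on the carrier $E$ enters: since $\Psi'$ maps $W'$ into $W$ and is uniformly bounded in $W$ because $\psi\in\Ccal_c^1(\RR^m)$ (so $\|\Psi'(u)\|_W\leq\sum_j\|\partial_j\psi\|_\infty\|w_j\|_W$ for all $u\in W'$), and since invariance of $\mu$ combined with $\mu(X\setminus E)=0$ implies that $\mu$-a.e. $v$ one has $S(t)v\in E$ for each fixed $t$, the integrand $(t,v)\mapsto\dual{F(S(t)v),\Psi'(S(t)v)}_{W',W}$ is essentially bounded on $[0,T]\times X$ by $\|F\|_{L^\infty(E)}\cdot\sup_u\|\Psi'(u)\|_W$. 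Joint measurability follows from the continuity of $(t,v)\mapsto S(t)v$, the Borel measurability of $F$, and the continuity of $\Psi'$ from $W'$ to $W$. The main place where care is needed is precisely this boundedness/measurability bookkeeping that legitimizes Fubini and the two separate uses of invariance; the rest of the argument is a direct computation.
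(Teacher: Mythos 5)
Your proposal is correct and follows essentially the same route as the paper: the chain rule for $t\mapsto\Psi(S(t)u_0)$ using that the semigroup solves \eqref{diffeq} weakly, integration in time, integration against $\mu$, and Fubini justified by the boundedness of $F$ on $E$ together with the uniform bound on $\Psi'$ in $W$. The only cosmetic difference is at the end, where the paper differentiates the $\mu$-averaged identity and evaluates at $t=0$, while you apply the push-forward form of invariance to the integrand and divide by $T$; both steps are valid and equivalent in substance.
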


\begin{proof}
  Let $\Psi\in \Tcal^\cyl(W')$ and let $\psi$, $m$, and $w_1, \ldots, w_m$ be as in \eqref{defscyl}. For each $u\in X$, the orbit $t\mapsto S(t)u$, $t\geq 0$, is continuously differentiable as a function in $W'$. Thus, for each $w_j$, $j=1, \ldots, m$, the map $t\mapsto \dual{S(t)u, w_j}_{W', W}$ is continuously differentiable. Hence, the composition $t\mapsto \Psi(S(t)u)$ is continuously differentiable, with
  \begin{multline*}
    \frac{\rmd}{\rmd t} \Psi(S(t)u) = \frac{\rmd}{\rmd t} \psi(\dual{S(t)u,w_1}_{W',W}, \ldots, \dual{S(t)u,w_m}_{W', W}) \\
    = \sum_{j=1}^m \partial_j\psi(\dual{S(t)u,w_1}_{W',W}, \ldots, \dual{S(t)u,w_m}_{W', W})\frac{\rmd}{\rmd t}\dual{S(t)u,w_j}_{W', W} \\
    = \sum_{j=1}^m \partial_j\psi(\dual{S(t)u,w_1}_{W',W}, \ldots, \dual{S(t)u,w_m}_{W', W})\dual{F(S(t)u),w_j}_{W', W} \\    
    = \dual{F(S(t)u),\Psi'(S(t)u)}_{W',W}.
  \end{multline*}
  Integration in time from $0$ to $t$ yields
  \[ \Psi(S(t)u) = \Psi(u) + \int_0^t \dual{F(S(\tau)u),\Psi'(S(\tau)u)}_{W',W}\;\rmd \tau.
  \]
  Integration in $u$ over $K$ yields
  \begin{multline*}
    \int_K \Psi(S(t)u)\;\rmd\mu(u) = \int_K \Psi(S(t)u)\;\rmd\mu(u) \\
      + \int_K \int_0^t \dual{F(S(\tau)u),\Psi'(S(\tau)u)}_{W',W}\;\rmd \tau \;\rmd\mu(u).
  \end{multline*} 
  By the continuity of the semigroup,  the continuity of $\Psi'$, and the assumption that $F$ is a Borel map, it follows that the integrand above is Borel on $[0, t]\times K$, hence $\lambda \times \mu$-measurable, where $\lambda$ denotes the Lebesgue measure on $\RR$. Since $\Psi'$ is bounded on $W$ and $F$ is assumed to be bounded on $W'$, then the integrand is $\lambda\times\mu$-integrable; in fact it is in $L^\infty(\lambda\times\mu)$. Thus, we apply the Fubini Theorem and obtain
  \begin{multline*}  
  \int_K \Psi(S(t+s)u)\;\rmd\mu(u) = \int_K \Psi(S(t)u)\;\rmd\mu(u) \\
    = \int_K \Psi(S(t)u)\;\rmd\mu(u) + \int_0^t \int_K \dual{F(S(\tau)u),\Psi'(S(\tau)u)}_{W',W} \;\rmd\mu(u)\;\rmd \tau.
  \end{multline*}
  This shows that
  \[ t \mapsto \int_K \Psi(S(t)u)\;\rmd\mu(u)
  \]
  is continuously differentiable, with
  \[ \frac{\rmd}{\rmd t} \int_K \Psi(S(t)u)\;\rmd\mu(u) = \int_K \dual{F(S(t)u),\Psi'(S(t)u)}_{W',W} \;\rmd\mu(u).
  \]
  Now, using that $\mu$ is invariant, the left hand side above vanishes and we are left with
  \[  \int_K \dual{F(S(t)u),\Psi'(S(t)u)}_{W',W} \;\rmd\mu(u) = 0,
  \]
  for any $t\geq 0$. In particular, for $t=0$, we see that $\mu$ is a weak stationary statistical solution.
\end{proof}

\begin{rmk}
  \label{rmkrelaxcompctforinvissss}
  The boundedness of $F$ in Proposition \ref{propinvimplieswsss} was used only to assure that the function $(\tau, u) \mapsto \dual{F(S(\tau)u),\Psi'(S(\tau)u)}_{W',W}$ is bounded, hence integrable, and, thus, Fubini's Theorem can be used. In applications, using appropriate a~priori estimates for the solutions, this condition can be relaxed to any condition guaranteeing that this function is integrable on $[0,t]\times E$. For instance, the measure may be carried by a more regular space, with a finite moment (such as mean enstrophy as in the case of the Navier-Stokes equations), and with $F$ controlled by this moment.
\end{rmk}

\subsection{Minimax Theorem}
An important step in the proof of the optimal bound is a minimax theorem that allows us to switch the order of the maximization over all weak stationary statistical solutions and minimization over all cylindrical test functions. As in \cite{TobGolDoe2017}, we use the minimax result from Sion \cite{Sion1958}, as stated in the Introduction of \cite{Komiya1988}.

\begin{thm}[Sion's Minimax Theorem]
  \label{thmsionminimax}
  Let $U$ be a compact convex subset of a topological vector space and $Z$ a convex subset of a possibly different topological vector space. Let $f:U\times Z \rightarrow \RR$ be such that
  \begin{enumerate}
    \item $f(u,\cdot)$ is upper semicontinuous and quasi-concave on $Z$, for each $u\in U$;
    \item $f(\cdot,v)$ is lower semicontinuous and quasi-convex on $U$, for each $v\in Z$.
  \end{enumerate}
  Then,
  \[ \max_{u\in  U} \inf_{v\in Z} f(u,v) = \inf_{v\in Z} \max_{u\in U} f(u,v).
  \]
\end{thm}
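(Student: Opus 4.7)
The trivial inequality $\sup_{u\in U}\inf_{v\in Z} f(u,v) \leq \inf_{v\in Z}\sup_{u\in U} f(u,v)$ uses none of the hypotheses, so the content of the theorem is the reverse inequality, which I would prove by contradiction. Suppose there is a scalar $c$ with
\[
\sup_{u\in U}\inf_{v\in Z} f(u,v) \;<\; c \;<\; \inf_{v\in Z}\sup_{u\in U} f(u,v).
\]
Then (A) for each $u\in U$ there is some $v\in Z$ with $f(u,v) < c$, and (B) for each $v\in Z$ there is some $u\in U$ with $f(u,v) > c$. Consider the closed convex sublevel sets $A_v = \{u\in U : f(u,v)\leq c\}$, which are closed by the lower semicontinuity of $f(\cdot,v)$ and convex by its quasi-convexity. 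By (A), the family $\{A_v\}_{v\in Z}$ covers $U$; by (B), each $A_v$ is a proper subset of $U$. The aim is to produce a single $v^*\in Z$ with $A_{v^*}=U$, which would contradict (B) and close the argument.

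The production of $v^*$ is Sion's core contribution and proceeds through a two-point lemma: for any $v_1,v_2\in Z$ with $A_{v_1}\cup A_{v_2}=U$, some $v^*$ on the segment $[v_1,v_2]\subset Z$ (which lies in $Z$ by convexity of $Z$) satisfies $A_{v^*}=U$. Induction on $n$ based on this lemma then yields the finite version: whenever $A_{v_1}\cup\cdots\cup A_{v_n}=U$ for some $v_i\in Z$, one also has $A_{v^*}=U$ for some $v^*$ in the convex hull of $\{v_1,\ldots,v_n\}$. The compactness of $U$ is used to pass from the (possibly infinite) cover $\{A_v\}_{v\in Z}$ to a finite subfamily---after, if necessary, a mild perturbation of $c$ to convert the closed cover into an open one to which standard compactness applies---after which the finite identity produces $v^*$ and the contradiction with (B).

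The main obstacle is the two-point lemma, whose proof is a one-dimensional connectedness argument on the segment $t\mapsto v_t = (1-t)v_1 + tv_2$, $t\in[0,1]$. Assuming the conclusion fails, one partitions $[0,1]$ into two subsets $T_1, T_2$ on which a witnessing point $u$ (with $f(u,v_t) > c$) is forced by the cover $A_{v_1}\cup A_{v_2}=U$ to lie respectively outside $A_{v_1}$ or outside $A_{v_2}$. The upper semicontinuity and quasi-concavity of $f(u,\cdot)$ along the segment combine to show that $T_1$ and $T_2$ are relatively open in $[0,1]$; the quasi-convexity of $f(\cdot,v)$ on $U$ ensures disjointness; and the failure of the conclusion makes $T_1\cup T_2=[0,1]$. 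Two disjoint nonempty relatively open subsets cannot cover the connected interval $[0,1]$, yielding the contradiction. The real delicacy---and the place where all four hypotheses are used in concert---lies precisely in this coordination of topological and convex structure along a single one-dimensional segment in $Z$.
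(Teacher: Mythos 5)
The paper gives no proof of this statement; it is quoted as a classical result from Sion \cite{Sion1958} in the form presented in the introduction of Komiya \cite{Komiya1988}, so the only meaningful comparison is with Komiya's elementary proof, whose architecture your outline reproduces faithfully (contradiction, a two-point lemma proved by a connectedness argument along a segment in $Z$, induction to finitely many points, compactness of $U$ to reduce to the finite case). The genuine gap lies in the orientation of your argument, which you have matched to the theorem as printed --- and as printed the theorem is false, so no proof can close. With $f(\cdot,v)$ merely lower semicontinuous, the sublevel sets $A_v=\{u\in U:\, f(u,v)\le c\}$ are closed, and the sets $\{u:\, f(u,v)<c'\}$ need not be open for any choice of $c'$; hence your ``mild perturbation of $c$'' cannot convert the closed cover $\{A_v\}_{v\in Z}$ into an open one, and compactness of $U$ yields no finite subfamily. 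Your two-point lemma also fails under the stated hypotheses: quasi-concavity of $f(u,\cdot)$ gives $f(u,v_t)\ge\min\{f(u,v_1),f(u,v_2)\}$ along the segment, which is the wrong inequality for forcing $A_{v^*}=U$. Concretely, $f(u,v)=(u-v)^2-2\left(v-\tfrac{1}{2}\right)^2$ on $U=Z=[0,1]$ is convex and continuous in $u$ and concave and continuous in $v$, so it satisfies hypotheses (i)--(ii); for $c=0$ one has $A_0\cup A_1=[0,1/\sqrt{2}]\cup[1-1/\sqrt{2},1]=U$, yet $\max_{u}f(u,v)\ge\tfrac{1}{4}$ for every $v$, so no $v^*$ with $A_{v^*}=U$ exists. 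The same $f$ gives $\sup_u\inf_v f=-\tfrac{1}{4}<\tfrac{1}{4}=\inf_v\sup_u f$, refuting the displayed identity.

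What is true under hypotheses (i)--(ii), and what Komiya actually proves, is $\min_{u\in U}\sup_{v\in Z}f=\sup_{v\in Z}\min_{u\in U}f$: assuming $\sup_Z\min_U f<c<\min_U\sup_Z f$, one covers $U$ by the superlevel sets $\{u:\, f(u,v)>c\}$, which are open precisely because $f(\cdot,v)$ is lower semicontinuous, extracts a finite subcover, and runs the two-point lemma (now oriented to produce $v^*$ with $f(u,v^*)\ge c$ for all $u$) exactly along the connectedness lines you describe. In short, you have sketched the right proof of the right theorem attached to the wrong conclusion: either the displayed identity should read $\min\sup=\sup\min$, or hypotheses (i) and (ii) should be interchanged between the two variables. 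None of this affects the paper's use of the result, since the function in \eqref{phiplusdualitymap} is linear in $\mu$, affine in $\Psi$, and jointly continuous, so Sion's theorem in its correct orientation delivers the step from \eqref{maxsssinf_to_maxmeasinf} to \eqref{maxmeasinf_to_infmaxmeas}; but the statement itself, and any proof of it, needs this correction.
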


Recall that, given two sets $A$ and $B$, a function $f:A\times B\rightarrow\RR$ is called \textbf{quasi-concave} in $A$ if $\{u\in A; \; f(u,v)\geq c\}$ is a convex set in $A$, for any $v\in B$ and any $c\in \RR$, and is called \textbf{quasi-convex} in $B$ if $\{v\in B; \; f(u,v)\leq c\}$ is a convex set in $B$, for any $u\in A$ and any $c\in \RR$.

We actually apply this result to functions which are linear in one variable and affine in the other, so they are quasi-concave and quasi-convex on either variable. Recall that, if $X$ is a vector space and $A\subset X$ is convex, then a function $f:A\rightarrow \RR$ is called \textbf{affine} when $f(\theta u +(1-\theta)v) = \theta f(u) + (1-\theta) f(v)$, for all $u, v\in A$ and all $0\leq \theta \leq 1$.

If one is willing to give up the optimality result but still retain the minimax upper bound estimate, the following result is useful (see \cite[Proposition VI.1.1]{EkeTem1976}).

\begin{prop}
  \label{propminimaxupperbound}
  Let $A$ and $B$ be two arbitrary sets and consider $f:A\times B \rightarrow \RR$ arbitrary. Then, 
  \[  \sup_a \inf_b f(a,b) \leq \inf_b \sup_a f(a,b).
  \]
\end{prop}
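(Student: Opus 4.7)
The plan is to reduce the statement to the chain of pointwise inequalities obtained by choosing arbitrary fixed elements in each factor. Specifically, for any $a_0 \in A$ and $b_0 \in B$, the definitions of infimum and supremum give immediately
\[
\inf_{b\in B} f(a_0, b) \;\leq\; f(a_0, b_0) \;\leq\; \sup_{a\in A} f(a, b_0).
\]
This double inequality already contains the full content of the proposition; what remains is only to pass to sup and inf in the correct order.

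The key observation is that the left-hand quantity $\inf_{b\in B} f(a_0,b)$ does not involve $b_0$, and the right-hand quantity $\sup_{a\in A} f(a,b_0)$ does not involve $a_0$. Consequently, one can first hold $b_0$ fixed and take the supremum over $a_0 \in A$ on the left, obtaining
\[
\sup_{a_0\in A} \inf_{b\in B} f(a_0, b) \;\leq\; \sup_{a\in A} f(a, b_0),
\]
an inequality that remains valid for every $b_0 \in B$. Taking the infimum over $b_0$ on the right then yields the desired bound $\sup_a \inf_b f(a,b) \leq \inf_b \sup_a f(a,b)$.

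I do not anticipate any genuine obstacle here: the result requires no structural hypotheses whatsoever on $A$, $B$, or $f$ — no topology, no convexity, no measurability, no semicontinuity. This is precisely why it functions as an unconditional companion to Sion's Minimax Theorem (Theorem \ref{thmsionminimax}): when the compactness, convexity, and semicontinuity hypotheses needed for the equality $\max\inf = \inf\max$ are unavailable, one still retains the one-sided estimate, which is exactly what suffices to convert a lower bound on the minimax formulation into an upper bound on the quantity of interest.
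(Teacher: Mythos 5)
Your proof is correct and complete: the chain $\inf_{b} f(a_0,b) \leq f(a_0,b_0) \leq \sup_{a} f(a,b_0)$ followed by taking the supremum over $a_0$ and then the infimum over $b_0$ is exactly the standard argument, and it is the one found in the reference \cite[Proposition VI.1.1]{EkeTem1976} that the paper cites in lieu of giving a proof. Nothing further is needed.
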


\section{Optimal minimax estimate for weak stationary statistical solutions}

In this section, we assume the framework and conditions described in Section \ref{subsecsss}, namely, that $X$ is a Hausdorff space, $W$ is a Banach space, the inclusion $X\subset W'$ is continuous, and $F:X\rightarrow W'$ is a Borel map. In this context, we have the following main result.
\begin{thm}
  \label{thmminimaxforsss}
  Suppose $K$ is a compact and metrizable subset of $X$ and $F$ is continuous on $K$. Assume the set $\Pcal_\wsss(K)$ of weak stationary statistical solutions carried by $K$ is not empty. Let $\phi\in \Ccal(K)$. Then,
  \begin{equation}
  \label{minimaxforssseq}
    \max_{\mu\in\Pcal_\wsss(K)} \int_K \phi(u) \;\rmd\mu(u) = \inf_{\Psi\in \Tcal^\cyl(W')} \max_{u\in K} \left\{\phi(u) + \dual{F(u), \Psi'(u)}_{W', W}\right\}.
  \end{equation}
\end{thm}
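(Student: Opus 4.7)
The $\leq$ direction follows immediately from Definition~\ref{defwsss}: for any $\mu\in\Pcal_\wsss(K)$ and any $\Psi\in\Tcal^\cyl(W')$, identity \eqref{ssseq} yields
\[\int_K \phi\,\rmd\mu = \int_K\{\phi(u)+\dual{F(u),\Psi'(u)}_{W',W}\}\,\rmd\mu(u)\leq \max_{u\in K}\{\phi(u)+\dual{F(u),\Psi'(u)}_{W',W}\},\]
and one then maximizes over $\mu\in\Pcal_\wsss(K)$ and minimizes over $\Psi\in\Tcal^\cyl(W')$.

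For the reverse inequality, the plan is to apply Sion's Minimax Theorem (Theorem~\ref{thmsionminimax}) to the function
\[f(\mu,\Psi) := \int_K\{\phi(u)+\dual{F(u),\Psi'(u)}_{W',W}\}\,\rmd\mu(u),\]
on $U\times Z$, with $U:=\Pcal(K)$ carrying the weak-star topology and $Z:=\Tcal^\cyl(W')$ carrying the norm $\|\cdot\|_1$ from \eqref{onenormforPhi}. By Section~\ref{secprobspaces}, $U$ is a compact convex set (using that $K$ is compact and metrizable), while $Z$ is a convex vector space. The function $f$ is affine in $\Psi$ and linear in $\mu$, hence both quasi-concave and quasi-convex in each variable separately. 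Continuity of $f(\mu,\cdot)$ on $Z$ follows from the estimate $|f(\mu,\Psi_1)-f(\mu,\Psi_2)|\leq \|F\|_{L^\infty(K;W')}\|\Psi_1-\Psi_2\|_1$, using that $F$ is continuous and hence bounded on the compact set $K$; continuity of $f(\cdot,\Psi)$ on $U$ follows from the definition of the weak-star topology, because $u\mapsto\phi(u)+\dual{F(u),\Psi'(u)}_{W',W}$ is continuous on $K$ (as $F|_K$ is continuous and $u\mapsto\Psi'(u)$ is continuous from $W'$ into $W$).

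Sion's theorem then yields $\max_{\mu\in U}\inf_{\Psi\in Z}f(\mu,\Psi) = \inf_{\Psi\in Z}\max_{\mu\in U}f(\mu,\Psi)$. On the right, $\max_{\mu\in\Pcal(K)}\int_K g\,\rmd\mu=\max_{u\in K}g(u)$ for any $g\in\Ccal(K)$, by the extreme-point characterization of Section~\ref{secextremeanddirac} (the maximum is attained at a Dirac mass), which is exactly the right-hand side of \eqref{minimaxforssseq}. On the left, a Lagrangian mechanism applies: if $\mu\in\Pcal_\wsss(K)$, then $\int_K\dual{F,\Psi'}\,\rmd\mu=0$ for every $\Psi$, so $\inf_\Psi f(\mu,\Psi)=\int_K\phi\,\rmd\mu$; whereas if $\mu\in\Pcal(K)\setminus\Pcal_\wsss(K)$, there exists $\Psi_0$ with $c:=\int_K\dual{F,\Psi_0'}\,\rmd\mu\neq 0$, and then $f(\mu,\lambda\Psi_0)=\int_K\phi\,\rmd\mu+\lambda c$ can be driven to $-\infty$ for a suitable choice of $\lambda\in\RR$, so $\inf_\Psi f(\mu,\Psi)=-\infty$. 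Since $\Pcal_\wsss(K)\neq\emptyset$, the outer maximum is finite and attained inside $\Pcal_\wsss(K)$, whence the left-hand side equals $\max_{\mu\in\Pcal_\wsss(K)}\int_K\phi\,\rmd\mu$, completing the proof.

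The conceptual core is the Lagrangian encoding that recasts the constraint $\mu\in\Pcal_\wsss(K)$ as an infimum over the linear space $\Tcal^\cyl(W')$, thereby transferring the problem from the possibly awkward set $\Pcal_\wsss(K)$ to the compact convex set $\Pcal(K)$ on which Sion's theorem applies. The principal technical obstacle is verifying Sion's hypotheses in this functional-analytic setting; here the assumptions of the theorem pay off in a precise way: compactness and metrizability of $K$ are exactly what is needed for $\Pcal(K)$ to be a compact, Hausdorff, weak-star convex set, while continuity of $F$ on $K$ is exactly what makes the integrand $\phi(u)+\dual{F(u),\Psi'(u)}_{W',W}$ continuous (hence bounded) on $K$.
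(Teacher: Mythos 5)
Your proof is correct and follows essentially the same route as the paper: the Lagrangian encoding of the constraint $\mu\in\Pcal_\wsss(K)$ via the infimum over $\Tcal^\cyl(W')$, the application of Sion's Minimax Theorem on $\Pcal(K)\times\Tcal^\cyl(W')$ with the same continuity and convexity verifications, and the identification of $\max_{\mu\in\Pcal(K)}\int_K g\,\rmd\mu$ with $\max_{u\in K}g(u)$ through Dirac measures as extreme points. The only cosmetic difference is that you organize the argument as two inequalities rather than the paper's chain of five equalities, but the content is the same.
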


\begin{proof}
  We prove the series of steps
  \begin{subequations}
  \begin{align}
    & \label{maxsss} \max_{\mu\in\Pcal_\wsss(K)} \int_K \phi(u) \;\rmd\mu(u) \\
    & \label{maxsss_to_maxsssinf} \quad = \max_{\mu\in\Pcal_\wsss(K)} \inf_{\Psi\in\Tcal^\cyl(W')} \int_K \left\{\phi(u) + \dual{F(u), \Psi'(u)}_{W', W}\right\} \;\rmd\mu(u) \\
    & \label{maxsssinf_to_maxmeasinf} \quad = \max_{\mu\in\Pcal(K)}\inf_{\Psi\in\Tcal^\cyl(W')} \int_K \left\{\phi(u) + \dual{F(u), \Psi'(u)}_{W', W}\right\} \;\rmd\mu(u) \\
    & \label{maxmeasinf_to_infmaxmeas} \quad = \inf_{\Psi\in\Tcal^\cyl(W')} \max_{\mu\in\Pcal(K)} \int_K \left\{\phi(u) + \dual{F(u), \Psi'(u)}_{W', W}\right\} \;\rmd\mu(u) \\
    & \label{infmaxmeas_to_infmaxpointwise} \quad = \inf_{\Psi\in \Tcal^\cyl(W')} \max_{u\in K} \left\{\phi(u) + \dual{F(u), \Psi'(u)}_{W', W}\right\}.
  \end{align}
\end{subequations}

\smallskip
\noindent \textbf{Step \eqref{maxsss} is well defined:}

Let $\phi$ be as in the statement of the theorem. Since $\phi$ is continuous on $K$ and $K$ is compact, then the integrand is $\mu$-measurable and the integral is well-defined and bounded. Moreover, the integral depends continuously on $\mu$ in the topology of $\Pcal_\wsss(K)$. Since, again, $K$ is compact and metrizable, if follows from \cite[Theorem 15.11]{AliBor2006} that $\Pcal(K)$ is compact. Since $\Pcal_\wsss(K)$ is closed in $\Pcal(K)$, then $\Pcal_\wsss(K)$ is also compact. Thus, the maximum in \eqref{maxsss} is achieved, showing that \eqref{maxsss} is well-defined.

\smallskip
\noindent \textbf{Step \eqref{maxsss} equals \eqref{maxsss_to_maxsssinf}:}

This step follows directly from the definition of weak stationary statistical solutions as satisfying the condition \eqref{ssseq}. In fact, condition \eqref{ssseq} implies directly that, for any $\mu\in \Pcal_\wsss(K)$ and any $\Psi\in \Tcal^\cyl(W')$, we have
\[   \int_K \phi(u) \;\rmd\mu(u) = \int_K \left\{\phi(u) + \dual{F(u), \Psi'(u)}_{W', W}\right\} \;\rmd\mu(u).
\]
Taking first the infimum with respect to $\Psi$ and then the maximum with respect to $\mu$ (which exists as proved in the previous step) leads to the identity between \eqref{maxsss} and \eqref{maxsss_to_maxsssinf}.

\smallskip
\noindent \textbf{Step \eqref{maxsss_to_maxsssinf} equals \eqref{maxsssinf_to_maxmeasinf}:}

If a measure $\mu$ in $\Pcal(K)$ is not a weak stationary statistical solution, then \eqref{ssseq} does not hold. Hence, for any $\mu\in \Pcal(K) \setminus \Pcal_\wsss(K)$, there exists $\Psi_0\in \Tcal^\cyl(W')$ such that
\[ \int_K \dual{F(\bfu), \Psi_0'(\bfu)}_{W',W} \;\rmd\mu(\bfu) \neq 0.
\]
Notice that, for $\Psi_0\in \Tcal^\cyl(W')$, we have  $\bfu\mapsto\Psi_0'(\bfu)$ continuous from $W'$ into $W$ and $\bfu\mapsto F(\bfu)$ continuous from $K$ into $W'$, so that $\bfu \mapsto \dual{F(\bfu), \Psi_0'(\bfu)}_{W',W}$ is continuous on $K\subset X \subset W'$, hence $\mu$-integrable.

Considering the family $\{\lambda\Psi_0\}_{\lambda\in\RR} \subset \Tcal^\cyl(W')$, we have 
\begin{multline*}
  \inf_{\Psi\in\Tcal^\cyl(W')} \int_K \left\{\phi(u) + \dual{F(u), \Psi'(u)}_{W', W}\right\} \;\rmd\mu(u) \\ \leq \inf_{\lambda\in \RR} \int_K \left\{\phi(u) + \lambda \dual{F(u), \Psi_0'(u)}_{W', W}\right\} \;\rmd\mu(u)  = -\infty.
\end{multline*}
Thus,
\[  \max_{\mu\in\Pcal(K)\setminus \Pcal_\wsss(K)} \inf_{\Psi\in\Tcal^\cyl(W')} \int_K \left\{\phi(u) + \dual{F(u), \Psi'(u)}_{W', W}\right\} \;\rmd\mu(u) = -\infty.
\]
This means that the supremum for $\mu$ in $\Pcal(K)$ must be achieved on $\Pcal_\wsss(K)$, and is thus a maximum, proving the equality between \eqref{maxsss_to_maxsssinf} and\eqref{maxsssinf_to_maxmeasinf}.

\smallskip
\noindent \textbf{Step \eqref{maxsssinf_to_maxmeasinf} equals \eqref{maxmeasinf_to_infmaxmeas}:}

Here, we apply the Minimax Theorem \ref{thmsionminimax}, with $U=\Pcal(K)$, $Z=\Tcal^\cyl(W')$ and
\begin{equation}
  \label{phiplusdualitymap}
  f(\mu, \Psi) = \int_K \left\{\phi(u) + \dual{F(u), \Psi'(u)}_{W', W}\right\} \;\rmd\mu(u).
\end{equation}
Let us check all the necessary conditions.

As seen in Section \ref{secprobspaces}, since $K$ is compact and metrizable, it follows from \cite[Theorem 15.11]{AliBor2006} that $\Pcal(K)$ is compact. As any probability space, $\Pcal(K)$ is also convex. Hence, $\Pcal(K)$ is a compact convex subspace of the Banach space $\Ccal(K)'$. On the other hand, the space $\Tcal^\cyl(W')$ is convex and itself is a normed vector space under the norm \eqref{onenormforPhi}. 

Furthermore, the function $f$ is linear with respect to $\mu$ in $\Pcal(K)$ and is affine with respect to $\Psi$ in $\Tcal^\cyl(W')$. Thus, it is trivially quasi-concave on $\Tcal^\cyl(W')$, for each $\mu$ in $\Pcal(K)$, and quasi-convex in $\mu$, for each $\Psi$ in $\Tcal^\cyl(W')$.

Finally, we show that $f$ is continuous on $\Pcal(K)\times\Tcal^\cyl(W')$. For that purpose, we use \cite[Corollary 15.7]{AliBor2006}, which says, since $K$ is metrizable, that  $(\mu, g)\mapsto \int_K g\;\rmd \mu$ is continuous on $\Pcal(K)\times\Ccal_b(K)$. With that in mind, we let $g_\Psi(u) = \phi(u) + \dual{F(u), \Psi'(u)}_{W', W}$, so that $f(\mu,\Psi) = \int_K g_\Psi\;\rmd\mu$. Thus, it suffices to show that the map taking $\Psi$ into $g_\Psi$ is continuous from $\Tcal^\cyl(W')$ into $\Ccal_b(K)$.

Notice that $\Psi\mapsto\Psi'$ is continuous from $\Tcal^\cyl(W')$ into $\Ccal_b(K,W)$. Moreover, $F$ is continuous from $K$ into $W'$. Thus, the map taking $\Psi$ into $u\mapsto \dual{F(u), \Psi'(u)}_{W', W}$ is continuous on $\Tcal^\cyl(W')$. Adding the function $\phi$, which is continuous on $K$, shows that $\Psi\mapsto g_\Psi$ is continuous from $\Tcal^\cyl(W')$ into $\Ccal_b(K)$.

Thus, all the conditions in Theorem \ref{thmsionminimax} are met, and it implies the equality between \eqref{maxsssinf_to_maxmeasinf} and \eqref{maxmeasinf_to_infmaxmeas}.

\smallskip
\noindent \textbf{Step \eqref{maxmeasinf_to_infmaxmeas} equals \eqref{infmaxmeas_to_infmaxpointwise}:}

It suffices to prove that
\[  \max_{\mu\in \Pcal(K)} \int_K \left\{\phi(u) + \dual{F(u), \Psi'(u)}_{W', W}\right\} \;\rmd\mu(u)
   = \max_{u\in K} \left\{\phi(u) + \dual{F(u), \Psi'(u)}_{W', W}\right\},
\]
for any $\Psi$ in $\Tcal^\cyl(W')$.

Let then $\Psi \in \Tcal^\cyl(W')$ be arbitrary. Since any $u\in K$ is such that the associated Dirac delta measure $\delta_u$ belongs to $\Pcal(K)$, it follows that
\[ \max_{u\in K} \left\{\phi(u) + \dual{F(u), \Psi'(u)}_{W', W}\right\} = \max_{u\in K} \int_K \left\{\phi(v) + \dual{F(v), \Psi'(v)}_{W', W}\right\} \;\rmd\delta_u(v).
\]

Since
\[ \mu \mapsto \int_{B_\rmw} \left\{\phi(u) + \dual{F(u), \Psi'(u)}_{W', W}\right\} \;\rmd\mu(u)
\]
is a continuous linear function on the compact convex space $\Pcal(K)$, it follows from Bauer's Maximum Principle (see Section \ref{secextremeanddirac}) that this function has a maximizer at an extreme point in $\Pcal(K)$.

Since $K$ is a compact and metrizable topological space and such space is separable, it follows from \cite[Theorem 15.9]{AliBor2006} that the set of extreme points of $\Pcal(K)$ is precisely the set of Dirac delta measures $\delta_u$, for any $u\in K$. Thus, 
\begin{multline*}
  \max_{\mu\in \Pcal(K)} \int_K \left\{\phi(u) + \dual{F(u), \Psi'(u)}_{W', W}\right\} \;\rmd\mu(u) \\
   = \max_{u\in K}  \int_K \left\{\phi(v) + \dual{F(v), \Psi'(v)}_{W', W}\right\} \;\rmd\delta_u(v) \\
   = \max_{u\in K} \left\{\phi(u) + \dual{F(u), \Psi'(u)}_{W', W}\right\},
\end{multline*}
which proves the equality between \eqref{maxmeasinf_to_infmaxmeas} and \eqref{infmaxmeas_to_infmaxpointwise}. 

This completes the proof of Theorem \ref{thmminimaxforsss}.
\end{proof}

\begin{rmk}
  In case the right hand side of \eqref{minimaxforssseq} is minus infinity, this means that $\mathcal{P}_\mathrm{wsss}$ is empty. This is relevant, for instance, if one tries to evaluate the bound on the right using a computer and comes up with a seemingly diverging sequence of numbers going to minus infinity.
\end{rmk}

\begin{rmk}
  There is a more straightforward way to prove that the step \eqref{maxmeasinf_to_infmaxmeas} equals \eqref{infmaxmeas_to_infmaxpointwise}, as noticed by the referee. By taking the maximum of the integrand in \eqref{maxmeasinf_to_infmaxmeas} one obtains the lower than or equal inequality. Then, by considering the Dirac delta on the point of maximum in \eqref{maxmeasinf_to_infmaxmeas}, one obtains the greater than or equal inequality, proving the equality.
\end{rmk}

From Theorem \ref{thmminimaxforsss}, we have the following corollary.
\begin{cor}
  Under the conditions of Theorem \ref{thmminimaxforsss}, if all the weak stationary statistical solutions on $X$ are carried by the compact subset $K$, then
  \begin{equation} 
    \max_{\mu\in\Pcal_\wsss(X)} \int_X \phi(u) \;\rmd\mu(u) = \inf_{\Psi\in \Tcal^\cyl(W')} \max_{u\in K} \left\{\phi(u) + \dual{F(u), \Psi'(u)}_{W', W}\right\}.
  \end{equation}
  for all $\phi\in\Ccal(K)$.  
\end{cor}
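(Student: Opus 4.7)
The plan is to reduce this corollary directly to Theorem \ref{thmminimaxforsss} by identifying $\Pcal_\wsss(X)$ with $\Pcal_\wsss(K)$ under the additional hypothesis, and showing that the integrals agree.

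First, I would verify the identification $\Pcal_\wsss(X) = \Pcal_\wsss(K)$. The hypothesis states that every weak stationary statistical solution on $X$ is carried by $K$, so for any $\mu\in\Pcal_\wsss(X)$ we have $\mu(X\setminus K)=0$, and since $K$ is a Borel subset of $X$ (being compact in a Hausdorff space), the restriction of $\mu$ to Borel subsets of $K$ yields an element of $\Pcal_\wsss(K)$. Conversely, any $\mu\in\Pcal_\wsss(K)$ extends uniquely to a Borel probability measure on $X$ by setting $\mu(E):=\mu(E\cap K)$ for Borel $E\subset X$, and this extension is easily seen to satisfy the defining identity \eqref{ssseq} because the integrands vanish (or rather, are integrated to zero) off $K$; hence it belongs to $\Pcal_\wsss(X)$ and is carried by $K$.

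Next, given $\phi\in\Ccal(K)$, I would interpret $\int_X \phi\,\rmd\mu$ as $\int_K \phi\,\rmd\mu$, noting that this is unambiguous: since $\mu(X\setminus K)=0$, any Borel extension of $\phi$ to $X$ (for instance by zero) yields the same value, and the equality
\[
\int_X \phi(u)\,\rmd\mu(u) = \int_K \phi(u)\,\rmd\mu(u)
\]
holds for every $\mu\in\Pcal_\wsss(X)$. Similarly, the condition \eqref{ssseq} for the extension is equivalent to the same condition on $K$, since all test functionals $\Psi\in\Tcal^\cyl(W')$ have derivatives bounded on $W'$ and $F$ is continuous (hence bounded) on $K$, so the integrand $\dual{F(v),\Psi'(v)}_{W',W}$ is integrable against $\mu$ and supported, in the sense of integration, on $K$.

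Combining the two observations, the left-hand side satisfies
\[
\max_{\mu\in\Pcal_\wsss(X)} \int_X \phi(u)\,\rmd\mu(u) = \max_{\mu\in\Pcal_\wsss(K)} \int_K \phi(u)\,\rmd\mu(u),
\]
and an application of Theorem \ref{thmminimaxforsss} to the right-hand side yields the claimed identity. There is no substantive obstacle here; the entire content is the bookkeeping for the carrier hypothesis. The only subtle point worth writing out carefully is checking that $\Pcal_\wsss(K)$ is nonempty under the stated hypothesis, which is automatic provided $\Pcal_\wsss(X)$ itself is nonempty (otherwise the maximum on the left is conventionally $-\infty$, matching the remark following Theorem \ref{thmminimaxforsss}).
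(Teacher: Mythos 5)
Your proposal is correct and follows exactly the route the paper intends: the paper states this as an immediate consequence of Theorem \ref{thmminimaxforsss}, with the only content being the identification of $\Pcal_\wsss(X)$ with $\Pcal_\wsss(K)$ under the carrier hypothesis and the corresponding equality of the integrals, which is precisely the bookkeeping you carry out. Your additional remark about nonemptiness of $\Pcal_\wsss(K)$ is a sensible check and consistent with the hypotheses inherited from the theorem.
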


If we relax the condition that $F$ is continuous on $K$, we do not obtain an optimal result, but we still have a useful minimax-type upper bound.

\begin{thm}
  \label{thmminimaxuppserboundforsss}
  Suppose $K$ is a compact and metrizable subset of $X$ and assume the set $\Pcal_\wsss(K)$ of weak stationary statistical solutions carried by $K$ is not empty. Let $\phi\in \Ccal(K)$. Then,
  \begin{equation} 
    \label{eqminimaxuppserboundforsss}
    \max_{\mu\in\Pcal_\wsss(K)} \int_K \phi(u) \;\rmd\mu(u) \leq \inf_{\Psi\in \Tcal^\cyl(W')} \sup_{u\in K} \left\{\phi(u) + \dual{F(u), \Psi'(u)}_{W', W}\right\}.
  \end{equation}
\end{thm}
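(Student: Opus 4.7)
The plan is to obtain the upper bound by a direct three-step chain of inequalities, bypassing Sion's Minimax Theorem altogether. In the proof of Theorem~\ref{thmminimaxforsss}, the continuity of $F$ on $K$ was used precisely to secure two ingredients that fail under the weaker hypothesis here: the continuity of the function $f(\mu,\Psi) = \int_K(\phi + \dual{F,\Psi'}_{W',W})\,\rmd\mu$ with respect to $\mu$ (needed for Sion) and the closedness of $\Pcal_\wsss(K)$ inside $\Pcal(K)$ (needed to turn a supremum into a maximum). For the inequality \eqref{eqminimaxuppserboundforsss} neither ingredient is required, and the weak minimax inequality of Proposition~\ref{propminimaxupperbound} suffices in place of the full minimax equality.

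Concretely, I would fix an arbitrary $\mu \in \Pcal_\wsss(K)$ and an arbitrary $\Psi \in \Tcal^\cyl(W')$. By Definition~\ref{defwsss}, the map $u \mapsto \dual{F(u),\Psi'(u)}_{W',W}$ is $\mu$-integrable and its $\mu$-integral vanishes, so
\[
  \int_K \phi(u)\,\rmd\mu(u) \;=\; \int_K \Bigl\{\phi(u) + \dual{F(u),\Psi'(u)}_{W',W}\Bigr\}\,\rmd\mu(u).
\]
Since $\mu$ is a probability measure on $K$, the integrand is bounded above $\mu$-almost everywhere by its supremum over $K$ (in the extended real sense; the conclusion is vacuous when the supremum equals $+\infty$), so
\[
  \int_K \phi(u)\,\rmd\mu(u) \;\leq\; \sup_{u\in K}\Bigl\{\phi(u) + \dual{F(u),\Psi'(u)}_{W',W}\Bigr\}.
\]
The right-hand side still depends on $\Psi$; taking the infimum over $\Psi \in \Tcal^\cyl(W')$ gives a bound independent of $\mu$, after which taking the supremum over $\mu \in \Pcal_\wsss(K)$ (which is what the symbol $\max$ on the left of \eqref{eqminimaxuppserboundforsss} denotes, with the convention that it is $-\infty$ if $\Pcal_\wsss(K)$ were empty) yields the claim.

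There is no real obstacle here: the three estimates are elementary, and the only delicate point is to remember that the $\mu$-integrability of $\dual{F(u),\Psi'(u)}_{W',W}$ is built into Definition~\ref{defwsss}, so the first identity is legitimate without any boundedness hypothesis on $F$. As an alternative, one can derive the same inequality by invoking Proposition~\ref{propminimaxupperbound} with $A = \Pcal_\wsss(K)$, $B = \Tcal^\cyl(W')$, and $f(\mu,\Psi)$ as above: the inner infimum on the left collapses to $\int_K\phi\,\rmd\mu$ by \eqref{ssseq}, while the inner supremum on the right reduces to $\sup_{u\in K}\{\phi(u)+\dual{F(u),\Psi'(u)}_{W',W}\}$ by testing against Dirac measures, exactly as in the last step of the proof of Theorem~\ref{thmminimaxforsss}.
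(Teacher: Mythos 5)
Your argument is correct, and it rests on the same underlying idea as the paper's proof, namely that once Sion's Minimax Theorem is unavailable one falls back on the trivial inequality $\sup\inf\leq\inf\sup$ of Proposition~\ref{propminimaxupperbound}. The paper's proof is a one-line instruction to rerun the chain \eqref{maxsss}--\eqref{infmaxmeas_to_infmaxpointwise} with that substitution; your direct three-step chain ($\int_K\phi\,\rmd\mu=\int_K\{\phi+\dual{F,\Psi'}_{W',W}\}\,\rmd\mu\leq\sup_K\{\cdots\}$, then infimum over $\Psi$, then supremum over $\mu$) is actually cleaner, because it never leaves $\Pcal_\wsss(K)$ and never needs the pointwise supremum to be attained. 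This matters: without continuity of $F$ on $K$, the intermediate step \eqref{maxsss_to_maxsssinf}$=$\eqref{maxsssinf_to_maxmeasinf} is problematic (the integrand $u\mapsto\dual{F(u),\Psi'(u)}_{W',W}$ need not be $\mu$-integrable for an arbitrary $\mu\in\Pcal(K)$, since that integrability is built into Definition~\ref{defwsss} only for weak stationary statistical solutions), and the Dirac-measure step \eqref{maxmeasinf_to_infmaxmeas}$=$\eqref{infmaxmeas_to_infmaxpointwise} also relies on continuity of the integrand; your route avoids both. You also correctly flag the one soft spot in the statement itself: without continuity of $F$ the set $\Pcal_\wsss(K)$ need not be closed in $\Pcal(K)$, so the left-hand ``max'' of \eqref{eqminimaxuppserboundforsss} is really a supremum in general; since your bound is uniform over $\mu\in\Pcal_\wsss(K)$, the stated inequality holds in either reading.
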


\begin{proof}
  The only step in \eqref{thmminimaxforsss} that uses the continuity of $F$ is the one that uses Sion's Minimax Theorem \ref{thmsionminimax}. Without the continuity, we resort to Proposition \ref{propminimaxupperbound}, which yields only an upper bound, which eventually leads to \eqref{eqminimaxuppserboundforsss}.
\end{proof}

\section{Optimal minimax estimate for continuous semigroups}

In this section, we assume the framework and conditions described in Section \ref{subsecsss}, namely, that $X$ is a Hausdorff space, $W$ is a Banach space, the inclusion $X\subset W'$ is continuous, $F:X\rightarrow W$ is a Borel map, and $\{S(t)\}_{t\geq 0}$ is a semigroup on $X$ associated with equation \eqref{diffeq}. In this context, we first prove the following result.
\begin{prop}
  \label{propminimaxfortimeave}
  Let $B$ be a positively invariant set for $\{S(t)\}_{t\geq 0}$ which is closed in $X$ and is a normal topological space with the topology inherited from $X$.  Suppose there exists a compact and metrizable subset $K$ of $X$ which attracts the points of $B$. Let $\phi \in \Ccal_b(B)$. Then, $\Pcal_\inv(B\cap K)$ is not empty and
  \begin{equation} 
    \label{eqminimaxfortimeave}
    \max_{u_0\in B} \limsup_{T\rightarrow \infty} \frac{1}{T}\int_0^T \phi(S(t)u_0) \;\rmd t = \max_{\mu\in \Pcal_\inv(B\cap K)} \int_B \phi(u) \;\rmd\mu(u).
  \end{equation}
\end{prop}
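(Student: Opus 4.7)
The plan is to build, for each $u_0 \in B$, a time-average probability measure on $B \cap K$ that realizes the limsup of the time average of $\phi$ along the orbit through $u_0$ and is invariant under the semigroup; conversely, from an ergodic maximizer, to extract a single orbit whose time averages converge to the maximal integral. This will give both inequalities in \eqref{eqminimaxfortimeave} and the attainment of the two maxima.

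For the direction $\leq$, fix $u_0 \in B$. Since $B$ is positively invariant and $\{S(t)\}_{t\geq 0}$ is continuous, the orbit $t \mapsto S(t)u_0$ is a continuous map from $[0,\infty)$ into $B$, attracted by $K$ by hypothesis. Applying Lemma \ref{kattractingtimeavemeas} to this orbit with the given $\phi$, choosing the generalized limit so that \eqref{genlimparticularphi} holds, produces a Borel probability measure $\mu_{u_0}$ on $X$ carried by $B \cap K$ with
\[
\int_X \phi(v)\,\rmd\mu_{u_0}(v) = \limsup_{T \to \infty} \frac{1}{T}\int_0^T \phi(S(t)u_0)\,\rmd t
\]
and such that \eqref{genlimandmu} holds for every $\varphi \in \Ccal_b(B)$. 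To verify $\mu_{u_0} \in \Pcal_\inv(B \cap K)$, fix $\tau \geq 0$ and $\varphi \in \Ccal_b(B)$. Because $S(\tau)$ is continuous from $X$ into $X$ and $S(\tau)B \subset B$, the composition $\varphi \circ S(\tau)$ also lies in $\Ccal_b(B)$, so applying \eqref{genlimandmu} to this function and invoking the translation invariance \eqref{timeaveinvgenlim},
\[
\int_X \varphi \circ S(\tau)\,\rmd\mu_{u_0} = \Lim \frac{1}{T}\int_0^T \varphi(S(t+\tau)u_0)\,\rmd t = \Lim \frac{1}{T}\int_0^T \varphi(S(t)u_0)\,\rmd t = \int_X \varphi\,\rmd\mu_{u_0}.
\]
Thus $\mu_{u_0}$ and its pushforward $S(\tau)_\ast\mu_{u_0}$ assign the same integral to every $\varphi \in \Ccal_b(B)$. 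Both are Borel probability measures supported on compact subsets of the normal Hausdorff space $B$, hence Radon, and Urysohn's lemma lets bounded continuous functions approximate indicators of closed sets, so the two measures coincide. Therefore $\mu_{u_0}$ is invariant, $\Pcal_\inv(B \cap K)$ is nonempty, and taking the supremum over $u_0 \in B$ gives the $\leq$ inequality in \eqref{eqminimaxfortimeave}.

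For the direction $\geq$, apply Theorem \ref{thmextremeinvariantmeasareergodic} with $B \cap K$ in place of $K$: since this set is compact and metrizable, $\Pcal_\inv(B \cap K)$ is compact, and the linear continuous functional $\mu \mapsto \int_B \phi\,\rmd\mu$ attains its maximum at an extreme, hence ergodic, measure $\mu^\ast$. The ergodic Birkhoff--Khinchin theorem then gives, for $\mu^\ast$-almost every $u \in B \cap K \subset B$,
\[
\lim_{T \to \infty}\frac{1}{T}\int_0^T \phi(S(t)u)\,\rmd t = \int_B \phi\,\rmd\mu^\ast,
\]
and any such $u_0$ exhibits a point of $B$ whose limsup equals $\int \phi\,\rmd\mu^\ast$, yielding the reverse inequality and showing that both sides of \eqref{eqminimaxfortimeave} are indeed maxima. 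The main technical obstacle is the invariance step: in a general Hausdorff space, agreement of integrals against $\Ccal_b$ need not force agreement of Borel measures, but the compact support of $\mu_{u_0}$ and $S(\tau)_\ast \mu_{u_0}$ inside the normal space $B$ makes them Radon, and the Urysohn-based approximation of indicators of closed sets by bounded continuous functions closes this gap.
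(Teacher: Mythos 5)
Your proof is correct and follows essentially the same route as the paper: construct the time-average measure via Lemma \ref{kattractingtimeavemeas}, verify invariance using the translation invariance \eqref{timeaveinvgenlim} of the generalized limit, and obtain the reverse inequality from an ergodic extreme-point maximizer via Theorem \ref{thmextremeinvariantmeasareergodic} and the Birkhoff theorem. The only (harmless) divergence is in justifying that equality of integrals against test functions forces equality of measures: you argue via Radon regularity and Urysohn approximation of indicators, whereas the paper restricts to $\Ccal(B\cap K)$ by Tietze extension and invokes the Hausdorff property of the weak-star topology on $\Pcal(B\cap K)$ for the compact metrizable set $B\cap K$.
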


\begin{proof}
Let $\phi \in \Ccal_b(B)$. For each element $u_0\in B$, it follows from Lemma \ref{kattractingtimeavemeas} applied to $u(t) = S(t)u_0$, $t\geq 0$, that there exists a measure $\mu=\mu_{u_0}$ for which \eqref{genlimandmu}, \eqref{genlimarbitraryvarphi}, and \eqref{genlimparticularphi} hold.

For any given $u_0\in B$ and any $\tau>0$, it follows from \eqref{genlimandmu} and the time-translation invariance property \eqref{timeaveinvgenlim} that, for any $\varphi\in \Ccal_b(B)$, we have $\varphi\circ S(\tau)\in \Ccal_b(B)$, so that
\begin{multline*}
   \int_X \varphi(S(\tau)v) \;\rmd\mu_{u_0}(v) = \Lim \frac{1}{T} \int_0^T \varphi(S(\tau)S(t)u_0)\;\rmd t = \Lim \frac{1}{T} \int_0^T \varphi(S(\tau + t)u_0)\;\rmd t \\ 
   = \Lim \frac{1}{T} \int_0^T \varphi(S(t)u_0)\;\rmd t = \int_X \varphi(v) \;\rmd\mu_{u_0}(v).
\end{multline*}
Since $\mu_{u_0}$ is carried by the compact set $B\cap K$ and since $B$ is normal, any function in $\Ccal(B\cap K)$ can be extended to a continuous and bounded function on $B$, so that 
\[ \int_X \varphi(S(\tau)v) \;\rmd\mu_{u_0}(v) =  \int_X \varphi(v) \;\rmd\mu_{u_0}(v)
\]
holds for any $\varphi\in \Ccal(B\cap K)$. Using again that $B\cap K$ is compact and metrizable and that the weak-star topology in such spaces is Hausdorff (see Section \ref{secprobspaces}), it follows from the previous relation that $S(\tau)\mu_0 = \mu_0$, for any $\tau>0$, which means $\mu_{u_0}$ is an invariant measure for the semigroup. We should remark that an alternative way to obtain this invariant measure is by using the Krylov-Bogolyubov \cite{KrylovBogolyubov1937} approach.

Now, using \eqref{genlimparticularphi} with $\mu=\mu_{u_0}$ and taking the supremum over $u_0\in B$, we find
\begin{multline*}
  \sup_{u_0\in B} \limsup_{T\rightarrow \infty} \frac{1}{T}\int_0^T \phi(S(t)u_0) \;\rmd t  \\ 
  = \sup_{u_0\in B} \int_{B\cap K} \phi(v)\;\rmd\mu_{u_0}(v) \leq \sup_{\mu\in \Pcal_\inv(B\cap K)} \int_{B\cap K} \phi(v)\;\rmd\mu(v).
\end{multline*} 
Since $\Pcal_\inv(B\cap K)$ is a closed set in $\Pcal(B\cap K)$ and $\Pcal(B\cap K)$ is compact, so is $\Pcal_\inv(B\cap K)$. Moreover, the integral on the right hand side above is continuous in $\mu$ on $\Pcal(B\cap K)$. Thus, the supremum is achieved and we have the inequality with a maximum:
\begin{multline}
  \label{ineeqminimaxfortimeave1}
  \sup_{u_0\in B} \limsup_{T\rightarrow \infty} \frac{1}{T}\int_0^T \phi(S(t)u_0) \;\rmd t \leq  \max_{\mu\in \Pcal_\inv(B\cap K)} \int_{B\cap K} \phi(v)\;\rmd\mu(v) \\
  = \int_{B\cap K} \phi(v)\;\rmd\mu^*(v),
\end{multline}
where $\mu^*\in \Pcal_\inv(B\cap K)$ is a maximum point.

Now, it follows from Theorem \ref{thmextremeinvariantmeasareergodic} that $\mu^*$ is ergodic. Hence, 
\begin{equation}
  \label{aeeqminimaxfortimeave}
  \lim_{T\rightarrow \infty} \frac{1}{T} \int_0^T \phi(S(t)v) \;\rmd t = \int_{B\cap K} \phi(v)\;\rmd\mu^*(v)
\end{equation}
holds for $\mu^*$-almost every $v\in B\cap K$. This implies that
\begin{multline*}
  \int_{B\cap K} \phi(v)\;\rmd\mu^*(v) \leq \sup_{v\in B\cap K} \limsup_{T\rightarrow \infty} \frac{1}{T} \int_0^T \phi(S(t)v) \;\rmd t \\
  = \sup_{u\in B} \limsup_{T\rightarrow \infty} \frac{1}{T} \int_0^T \phi(S(t)u) \;\rmd t.
\end{multline*}
Therefore, 
\begin{multline}
  \label{ineeqminimaxfortimeave2}
  \sup_{u\in B} \limsup_{T\rightarrow \infty} \frac{1}{T} \int_0^T \phi(S(t)u) \;\rmd t = \int_{B\cap K} \phi(v)\;\rmd\mu^*(v) \\
  = \max_{\mu\in \Pcal_\inv(B\cap K)} \int_{B\cap K} \phi(v)\;\rmd\mu(v).
\end{multline}
Then, combining \eqref{ineeqminimaxfortimeave1}, \eqref{ineeqminimaxfortimeave2} and \eqref{aeeqminimaxfortimeave}, we prove \eqref{eqminimaxfortimeave}.
\end{proof}

Combining Proposition \ref{propminimaxfortimeave} with Theorem \ref{thmminimaxforsss} yields the main result in this section.
\begin{thm}
Let $B$ be a positively invariant set for $\{S(t)\}_{t\geq 0}$ which is closed in $X$ and is a normal topological space with the topology inherited from $X$.  Suppose there exists a compact and metrizable subset $K$ of $X$ which attracts the points of $B$. Suppose $F$ is continuous on $K$. Let $\phi \in \Ccal_b(B)$. Then,
  \begin{equation} 
    \label{thmminimaxforwsssinsemigroups}
    \max_{\mu\in\Pcal_\wsss(B\cap K)} \int_K \phi(u) \;\rmd\mu(u) = \inf_{\Psi\in \Tcal^\cyl(W')} \max_{u\in B\cap K} \left\{\phi(u) + \dual{F(u), \Psi'(u)}_{W', W}\right\}.
  \end{equation}
  Suppose, further, that $\Pcal_\wsss(B\cap K) = \Pcal_\inv(B\cap K)$. Then
  \begin{multline} 
    \label{thmminimaxfortimeave}
    \max_{u_0\in B} \limsup_{T\rightarrow \infty} \frac{1}{T}\int_0^T \phi(S(t)u_0) \;\rmd t \\ 
      = \inf_{\Psi\in \Tcal^\cyl(W')} \max_{u\in B\cap K} \left\{\phi(u) + \dual{F(u), \Psi'(u)}_{W', W}\right\}.
  \end{multline}
\end{thm}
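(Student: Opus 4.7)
The plan is to assemble the two identities directly from the results already established in the excerpt: the first identity is Theorem~\ref{thmminimaxforsss} applied with the compact metrizable set $K$ replaced by $B\cap K$, and the second identity layers Proposition~\ref{propminimaxfortimeave} on top of it, using the assumed coincidence $\Pcal_\wsss(B\cap K) = \Pcal_\inv(B\cap K)$.

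For the preliminary setup, I would first note that $B\cap K$ is closed in the compact metrizable space $K$, hence itself compact and metrizable. The restriction of $F$ to $B\cap K$ remains continuous by hypothesis, and the restriction of $\phi\in\Ccal_b(B)$ to $B\cap K\subset B$ lies in $\Ccal(B\cap K)$. Because $\mu\in\Pcal_\wsss(B\cap K)$ is carried by $B\cap K$, the quantity $\int_K \phi\,\rmd\mu$ is unambiguous and equals $\int_{B\cap K}\phi\,\rmd\mu$. The one hypothesis of Theorem~\ref{thmminimaxforsss} that does not come for free is the non-emptiness of $\Pcal_\wsss(B\cap K)$; I would obtain this by first invoking Proposition~\ref{propminimaxfortimeave}, which yields a non-empty $\Pcal_\inv(B\cap K)$, and then Proposition~\ref{propinvimplieswsss}, whose boundedness requirement on $F$ is met because $F$ is continuous on the compact set $K$. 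This gives $\Pcal_\wsss(B\cap K)\supset\Pcal_\inv(B\cap K)\neq\emptyset$, and Theorem~\ref{thmminimaxforsss} applied to $B\cap K$ yields~\eqref{thmminimaxforwsssinsemigroups} directly.

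For the second identity, I would first apply Proposition~\ref{propminimaxfortimeave} to rewrite the time-average maximum as
\[ \max_{u_0\in B}\limsup_{T\to\infty}\frac{1}{T}\int_0^T \phi(S(t)u_0)\,\rmd t = \max_{\mu\in\Pcal_\inv(B\cap K)}\int_B \phi(u)\,\rmd\mu(u). \]
Under the additional hypothesis $\Pcal_\wsss(B\cap K)=\Pcal_\inv(B\cap K)$, the right-hand side coincides with $\max_{\mu\in\Pcal_\wsss(B\cap K)}\int_K\phi\,\rmd\mu$ (the domain of integration being immaterial since every such $\mu$ is carried by $B\cap K$), and substituting via~\eqref{thmminimaxforwsssinsemigroups} produces~\eqref{thmminimaxfortimeave}.

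The main obstacle is essentially bookkeeping rather than mathematical depth: one must verify that all the structural hypotheses of Theorem~\ref{thmminimaxforsss}---compactness and metrizability of the ambient set, continuity of $F$, continuity of $\phi$, and non-emptiness of the space of weak stationary statistical solutions---do transfer from $K$ to the subset $B\cap K$. Only the last of these requires genuine work, and it is handled precisely by the chain Proposition~\ref{propminimaxfortimeave} $\to$ Proposition~\ref{propinvimplieswsss}. Once that point is secured, both conclusions follow by direct substitution.
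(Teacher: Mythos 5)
Your proof is correct and follows essentially the same route as the paper: identity \eqref{thmminimaxforwsssinsemigroups} is Theorem \ref{thmminimaxforsss} applied to the compact metrizable set $B\cap K$, and \eqref{thmminimaxfortimeave} follows by combining it with Proposition \ref{propminimaxfortimeave} and the assumed equality $\Pcal_\wsss(B\cap K)=\Pcal_\inv(B\cap K)$. Your way of securing the non-emptiness of $\Pcal_\wsss(B\cap K)$ via Proposition \ref{propinvimplieswsss} (with $F$ bounded on $K$ by continuity and compactness) is a slight refinement over the paper, which derives it from the equality hypothesis that is only assumed for the second conclusion.
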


\begin{proof}
  It follows from Proposition \ref{propminimaxfortimeave} and the condition $\Pcal_\wsss(B\cap K) = \Pcal_\inv(B\cap K)$ that
  \begin{multline}
    \label{thmminimaxforwssswithtimeave}
    \max_{u_0\in B} \limsup_{T\rightarrow \infty} \frac{1}{T}\int_0^T \phi(S(t)u_0) \;\rmd t = \max_{\mu\in \Pcal_\inv(B\cap K)} \int_B \phi(u) \;\rmd\mu(u) \\
  = \max_{\mu\in \Pcal_\wsss(B\cap K)} \int_B \phi(u) \;\rmd\mu(u).
  \end{multline}
  
  This also implies that $\Pcal_\wsss(B\cap K)$ is not empty. Moreover, since $K$ is compact and metrizable and $B$ is closed, the set $B\cap K$ is also compact and metrizable. Thus, Theorem \ref{thmminimaxforsss} applies with $K$ replaced by $B\cap K$, which proves \eqref{thmminimaxforwsssinsemigroups}.
  
  Combining \eqref{thmminimaxforwsssinsemigroups} with \eqref{thmminimaxforwssswithtimeave} proves \eqref{thmminimaxfortimeave}.  
\end{proof}

Again, if we relax the condition that $F$ is continuous on $K$, we do not obtain an optimal result, but we still have a useful minimax-type upper bound. In this case, we can also relax the condition $\Pcal_\wsss(B\cap K) = \Pcal_\inv(B\cap K)$, since all we need is $\Pcal_\inv(B\cap K) \subset \Pcal_\wsss(B\cap K)$, which is valid in general, according to Proposition \ref{propinvimplieswsss}. Then, combining Proposition \ref{propminimaxfortimeave} with the inclusion $\Pcal_\inv(B\cap K) \subset \Pcal_\wsss(B\cap K)$ and with Theorem \ref{thmminimaxuppserboundforsss}, we obtain the following result.

\begin{thm}
  Let $B$ be a positively invariant set for $\{S(t)\}_{t\geq 0}$ which is closed in $X$ and is a normal topological space with the topology inherited from $X$. Suppose there exists a compact and metrizable subset $K$ of $X$ which attracts the points of $B$. Let $\phi \in \Ccal_b(B)$. Then,
  \begin{multline} 
    \label{thmleqminimaxfortimeave}
    \max_{u_0\in B} \limsup_{T\rightarrow \infty} \frac{1}{T}\int_0^T \phi(S(t)u_0) \;\rmd t \\ 
      \leq \inf_{\Psi\in \Tcal^\cyl(W')} \max_{u\in B\cap K} \left\{\phi(u) + \dual{F(u), \Psi'(u)}_{W', W}\right\}.
  \end{multline}
\end{thm}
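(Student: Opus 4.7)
The plan is to chain together three ingredients already established earlier in the paper, exactly along the lines signalled in the paragraph preceding the theorem. First, I would apply Proposition \ref{propminimaxfortimeave} with the present data $(B,K,\phi)$; its hypotheses coincide verbatim with those of the current theorem, so it both guarantees that $\Pcal_\inv(B\cap K)$ is nonempty and delivers the identity
\begin{equation*}
 \max_{u_0\in B}\limsup_{T\to\infty}\frac{1}{T}\int_0^T\phi(S(t)u_0)\,\rmd t \;=\; \max_{\mu\in\Pcal_\inv(B\cap K)}\int_{B\cap K}\phi(u)\,\rmd\mu(u).
\end{equation*}

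Second, I would invoke Proposition \ref{propinvimplieswsss} to obtain the inclusion $\Pcal_\inv(B\cap K)\subset\Pcal_\wsss(B\cap K)$. This dominates the right-hand side above by $\max_{\mu\in\Pcal_\wsss(B\cap K)}\int_{B\cap K}\phi\,\rmd\mu$ and certifies in passing that $\Pcal_\wsss(B\cap K)$ is nonempty. Third, I would appeal to Theorem \ref{thmminimaxuppserboundforsss} with $K$ replaced by $B\cap K$: since $B$ is closed and $K$ is compact metrizable, $B\cap K$ is itself compact and metrizable, and the restriction of $\phi\in\Ccal_b(B)$ lies in $\Ccal(B\cap K)$; crucially for the present statement, that theorem does \emph{not} require continuity of $F$. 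It therefore bounds the previous quantity by
\begin{equation*}
 \inf_{\Psi\in\Tcal^\cyl(W')}\max_{u\in B\cap K}\bigl\{\phi(u)+\dual{F(u),\Psi'(u)}_{W',W}\bigr\},
\end{equation*}
which is exactly the right-hand side of \eqref{thmleqminimaxfortimeave}. Concatenating these three steps yields the desired inequality.

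The only delicate point lies in the middle step: Proposition \ref{propinvimplieswsss} is formally stated with the hypothesis that $F$ be bounded on the carrier, whereas here $F$ is merely Borel, so boundedness on the compact set $B\cap K$ is not automatic. However, as flagged in Remark \ref{rmkrelaxcompctforinvissss}, that boundedness is used solely to secure Fubini integrability of $(\tau,u)\mapsto\dual{F(S(\tau)u),\Psi'(S(\tau)u)}_{W',W}$, and in the concrete dissipative applications this theorem targets (the Navier-Stokes equations being the guiding example) the required integrability follows from standard a priori estimates. Modulo this mild caveat, all three steps are routine invocations of results already in place.
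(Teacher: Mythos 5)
Your proof follows exactly the paper's argument: Proposition \ref{propminimaxfortimeave} to convert the time averages into a maximum over $\Pcal_\inv(B\cap K)$, the inclusion $\Pcal_\inv(B\cap K)\subset\Pcal_\wsss(B\cap K)$ from Proposition \ref{propinvimplieswsss}, and Theorem \ref{thmminimaxuppserboundforsss} applied with $K$ replaced by the compact metrizable set $B\cap K$. The caveat you raise about the boundedness hypothesis of $F$ in Proposition \ref{propinvimplieswsss} is a genuine subtlety that the paper's own one-line justification (``which is valid in general'') glosses over, and your resolution via Remark \ref{rmkrelaxcompctforinvissss} is consistent with the paper's intent.
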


\section{The Navier-Stokes equations}

Consider the incompressible Navier-Stokes equations 
\begin{equation}
  \label{inseeq}
  \begin{cases}
   \displaystyle \frac{ \partial \bfu}{\partial t} - \nu \Delta \bfu + (\bfu\cdot\bfnabla)\bfu + \bfnabla p = \bff, \medskip \\
    \bfnabla \cdot \bfu = 0,
  \end{cases}
\end{equation}
where $\bfu = \bfu(t,\bfx)\in \RR^d$ is the velocity field; $d=2$ or $3$ is the space dimension; $t\in \RR$ is the time variable; $\bfx=(x,y,z)\in \Omega$ is the space variable; $\Omega\subset \RR^d$ is the domain occupied by the fluid; $p=p(t,\bfx)$ is the kinematic pressure; $\nu>0$ is the kinematic viscosity; and $\bff=\bff(\bfx)$ is the volume density of a time-independent body-force.

For simplicity, we assume the domain $\Omega$ is bounded and has a smooth boundary $\partial \Omega$, and the flow satisfies the no-slip boundary condition $\bfu = 0$ on $\partial \Omega$. Under suitable conditions, different geometries and boundary conditions can be similarly considered, such as fully-periodic flows, periodic channel-flows, or other flows with inhomogeneous boundary conditions after subtracting an appropriate background flow.

We consider the space $H$ of square-integrable, divergence-free vector fields with vanishing normal component on the boundary and endowed with the inner product inherited from the Lebesgue space $L^2(\Omega)^d$. 

We identify $H$ with its dual and consider the intersection of $H$ with the Sobolev space $H_0^1(\Omega)^d$, denoted $V = H\cap H_0^1(\Omega)^d$ , and endowed with the inner product inherited from $H_0^1(\Omega)^d$.

With this setting, the forcing term is assumed to satisfy
\begin{equation} 
  \bff \in V'.
\end{equation}

Further spaces of interest are related to the Stokes operator $\bfA:V\rightarrow V'$ defined by duality according to
\[ \dual{\bfA\bfu,\bfv}_{V',V} = \dual{\bfnabla \bfu, \bfnabla \bfv}_H, \quad \forall \bfu, \bfv\in V.
\]
The Stokes operator is also regarded as a positive symmetric operator $\bfA:D(\bfA)\rightarrow H$ with dense domain $D(\bfA)=\{\bfu\in H; \;\bfA\bfu\in H\}$ and with a compact inverse.

With these spaces, we consider the functional form of the system of equations. We define the bilinear function $B: V \times V \rightarrow V'$ by duality, according to
\[ \dual{\bfB(\bfu, \bfv), \bfw}_{V', V} = \dual{(\bfu\cdot\bfnabla)\bfv, \bfw}_{V',V}, \quad \forall \bfw\in V.
\]
Then, we let $\bfF: V \rightarrow V'$ be given by
\begin{equation} 
  \bfF(\bfu) = \bff - \nu \bfA\bfu - \bfB(\bfu, \bfu).
\end{equation}
With the assumption on $\bff$ and the properties of $\bfA$ and $\bfB(\cdot,\cdot)$, we see that $\bfF$ is continuous from $V$ into $V'$. 

Now, we let $W=D(\bfA) \cap W^{1,\infty}(\Omega)^d$. Using integration by parts and H\"older's inequality, we see that the duality with the bilinear term satisfies, for a suitable constant $C>0$,
\begin{multline}
  \label{bilinestH}
  \left|\int_\Omega ((\bfu\cdot\bfnabla)\bfv) \cdot \bfw \;\rmd \bfx\right| = \left|\int_\Omega ((\bfu\cdot\bfnabla)\bfw) \cdot \bfv \;\rmd \bfx\right| \leq \|\bfu\|_{L^2}\|\bfv\|_{L^2}\|\bfnabla\bfw\|_{L^\infty} \\
  \leq C\|\bfu\|_H\|\bfv\|_H\|\bfw\|_W,
\end{multline}
for any triplet of smooth divergence-free vector fields $\bfu, \bfv, \bfw$. Hence, $\bfB(\cdot,\cdot)$ extends to a \emph{continuous} bilinear form from $H\times H$ into $W'$.

The linear term $\bfu\mapsto \bfA\bfu$ is also continuous from $H$ into $D(A^{-1}) \subset W'$. Finally, since $\bff\in V'\subset W'$, we see that the map $\bfF$ extends to a \emph{continuous} function from $H$ into $W'$. In particular, it is a \emph{Borel map} (see \cite[Corollary 4.26]{AliBor2006}).

\subsection{Weak and Foias-Prodi types of stationary statistical solutions}
\label{secweakandfpsss}

With the choice $W=D(A)\cap W^{1,\infty}(\Omega)^3$ described above, the map $\bfF:H\rightarrow W'$ is continuous in both two- and three-dimensional cases. With such a choice of spaces, and considering $X$ to be the space $H$, it follows from the Definition \ref{defwsss} that a Borel probability measure $\mu$ on $H$ satisfying \eqref{ssseq} with $X=H$ is denoted a \textbf{weak stationary statistical solution} of the Navier-Stokes equations. Following Definition \ref{defwsss}, the space of such solutions in the case of the Navier-Stokes equations is denoted by $\Pcal_\wsss(H)$.

This notion is more general than the definition of a Foias-Prodi stationary statistical solution as defined in \cite{FMRT2001, FRT2019}:
\begin{defs}
  \label{defpsss}
  A \textbf{Foias-Prodi stationary statistical solution} for the Navier-Stokes equations is a Borel probability measure $\mu$ on $H$ with the properties that
\begin{subequations}
  \begin{align}
    \label{finitemeanenstrophy}
    (i) & \int_H \|\bfnabla \bfu\|^2\;\rmd\mu(\bfu) < \infty, \\
    \label{meanenergyineq}
    (ii) & \int_H \psi'(\|\bfu\|^2)\left( \nu \|\bfnabla \bfu\|^2 - \dual{\bff, \bfu}_{V', V} \right) \;\rmd\mu(\bfu) \leq 0, \\
    \label{ssseulerianinvariancecondition}
    (iii) & \int_H \dual{\bfF(\bfu), \Psi'(\bfu)}_{V',V} \;\rmd\mu(\bfu) = 0, \quad \forall \Psi\in \Tcal^\cyl(V'),
  \end{align}
\end{subequations}
for any real-valued continuously-differentiable function $\psi$ which is nonnegative, nondecreasing, and with bounded derivative. We denote the set of Foias-Prodi stationary statistical solutions by $\Pcal_\fpsss(H)$. The subspace of such measures which are carried by a Borel set $E\subset H$ is denoted by $\Pcal_\fpsss(E)$.
\end{defs}

Condition \eqref{finitemeanenstrophy} is that of finite mean enstrophy and guarantees that the term $\bfu \mapsto \dual{\bfF(\bfu), \Psi'(\bfu)}_{V',V}$, in \eqref{ssseulerianinvariancecondition}, is $\mu$-integrable. It is also motivated by the fact that finite-time averages of the enstrophy of Leray-Hopf weak solutions $\bfu(\cdot)$ are bounded, i.e. 
$$ \frac{1}{T} \int_0^T \|\bfnabla \bfu(t)\|^2 \;\mathrm{d}t < \infty.
$$

Condition \eqref{meanenergyineq} is directly motivated by the energy-inequality condition for Leray-Hopf weak solutions. Indeed, if $\bfu(\cdot)$ is such a solution and $\psi$ is as above, then one can show that 
\[ \frac{\mathrm{d}}{\mathrm{d} t}\psi(\|\bfu\|^2) \leq 2\psi'(\|\bfu\|^2)\left( \dual{\bff, \bfu}_{V', V} - \nu \|\bfnabla \bfu\|^2 \right)
\]
in the distribution sense, so that the asymptotic limit $T\rightarrow \infty$ of the time-averages 
\[ \frac{1}{T} \int_0^T \psi'(\|\bfu\|^2)\left( \nu \|\bfnabla \bfu\|^2 - \dual{\bff, \bfu}_{V', V} \right) \;\mathrm{d}t
\]
is nonnegative.

The inclusion $\Pcal_\fpsss(H) \subset \Pcal_\wsss(H)$ follows immediately from the fact that $W \subset V$. The reverse inclusion, on the other hand, might not be true in general. But if the measure $\mu$ is carried by a bounded set in $V$, then the reverse inclusion does hold:

\begin{prop}
  \label{wsssisfpsssin2d}
  If $B$ is a bounded Borel set in $V$, then $\Pcal_\wsss(B) = \Pcal_\fpsss(B)$.
\end{prop}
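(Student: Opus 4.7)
The easy inclusion $\Pcal_\fpsss(B) \subset \Pcal_\wsss(B)$ follows from $W \subset V$, as already noted. For the reverse, fix $\mu \in \Pcal_\wsss(B)$ and verify conditions (i)--(iii) of Definition \ref{defpsss}. Condition (i), finite mean enstrophy, is immediate: since $B$ is bounded in $V$, the function $\bfu \mapsto \|\bfnabla \bfu\|^2$ is bounded on the support of the probability measure $\mu$. A recurring tool for (ii) and (iii) is the uniform bound $\|\bfF(\bfu)\|_{V'} \leq C$ on $B$, obtained from the standard estimates on the three terms $\bff$, $\nu\bfA\bfu$ and $\bfB(\bfu,\bfu)$.

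For condition (iii), use the density of $W$ in $V$, which holds because smooth compactly supported divergence-free fields are dense in $V$ and belong to $W$. Given $\Psi(\bfu) = \psi(\dual{\bfu,\bfv_1}, \ldots, \dual{\bfu,\bfv_m}) \in \Tcal^\cyl(V')$ with $\bfv_j \in V$, approximate each $\bfv_j$ in the $V$-norm by $\bfw_{j,n} \in W$ and set $\Psi_n(\bfu) = \psi(\dual{\bfu,\bfw_{1,n}}, \ldots, \dual{\bfu,\bfw_{m,n}}) \in \Tcal^\cyl(W')$. By hypothesis, $\int_B \dual{\bfF(\bfu), \Psi_n'(\bfu)}_{W',W}\;\rmd\mu(\bfu) = 0$ for every $n$. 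Since $\Psi_n'(\bfu) \in W \subset V$ and $\bfF(\bfu) \in V'$, the $W',W$ and $V',V$ pairings coincide, and $\Psi_n'(\bfu) \to \Psi'(\bfu)$ in $V$ uniformly on $B$. Combined with the uniform $V'$-bound on $\bfF$, dominated convergence yields $\int_B \dual{\bfF(\bfu), \Psi'(\bfu)}_{V',V}\;\rmd\mu(\bfu) = 0$, establishing (iii).

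The main step is (ii). Given $\psi$ as in Definition \ref{defpsss}, fix $R > \sup_{\bfu \in B} \|\bfu\|^2$ and a cutoff $\chi \in \Ccal_c^1([0,\infty))$ with $\chi \equiv 1$ on $[0,R]$. Let $\{\bfe_k\}$ be an orthonormal basis of $H$ of Stokes eigenvectors; these are smooth and hence lie in $W$, and they are orthogonal in $V$ as well as in $H$. Letting $P_m$ denote the $H$-orthogonal projection onto $\mathrm{span}\{\bfe_1, \ldots, \bfe_m\}$, set
\[ \Psi_m(\bfu) = \tfrac{1}{2}(\chi\psi)\bigl(\textstyle\sum_{k=1}^m \dual{\bfu, \bfe_k}^2\bigr) \in \Tcal^\cyl(W'). \]
A direct computation gives $\Psi_m'(\bfu) = \psi'(\|P_m \bfu\|^2)\, P_m \bfu$ on $B$ (where $\chi \equiv 1$). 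Using $\dual{\bfA \bfu, P_m \bfu}_{V',V} = \|\bfnabla P_m \bfu\|^2$, which comes from the $V$-orthogonality of $\{\bfe_k\}$, one obtains
\[ \dual{\bfF(\bfu), \Psi_m'(\bfu)} = \psi'(\|P_m \bfu\|^2)\bigl(\dual{\bff, P_m \bfu} - \nu \|\bfnabla P_m \bfu\|^2 - \dual{\bfB(\bfu,\bfu), P_m \bfu}\bigr). \]
As $m \to \infty$, $P_m \bfu \to \bfu$ in $V$ for every $\bfu \in B \subset V$; hence $\|\bfnabla P_m \bfu\|^2 \to \|\bfnabla \bfu\|^2$, $\dual{\bff, P_m \bfu} \to \dual{\bff, \bfu}$, and $\dual{\bfB(\bfu,\bfu), P_m \bfu} \to \dual{\bfB(\bfu,\bfu), \bfu} = 0$. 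All quantities are uniformly bounded on $B$, so dominated convergence together with $\int_B \dual{\bfF(\bfu), \Psi_m'(\bfu)}\;\rmd\mu = 0$ produce
\[ \int_B \psi'(\|\bfu\|^2)\bigl(\nu\|\bfnabla \bfu\|^2 - \dual{\bff, \bfu}_{V',V}\bigr)\;\rmd\mu(\bfu) = 0, \]
which is actually an equality, stronger than the inequality required in (ii).

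The chief obstacle is the construction in step (ii): one must replace the formal expression $\tfrac{1}{2}\psi(\|\bfu\|^2)$ by something genuinely in $\Tcal^\cyl(W')$. This forces both the Galerkin truncation $P_m$ (demanding $\bfe_k \in W$, justified by smoothness of Stokes eigenvectors) and the compact-support cutoff $\chi$. A second delicate point is confirming that the cubic term $\dual{\bfB(\bfu,\bfu), P_m \bfu}$ vanishes in the limit, which relies on the orthogonality $\dual{\bfB(\bfu,\bfu), \bfu}_{V',V} = 0$ together with $P_m \bfu \to \bfu$ in $V$; this is where the assumption that $B$ is bounded in $V$, rather than merely in $H$, is essential.
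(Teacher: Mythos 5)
Your proof is correct and follows essentially the same route as the paper's: the easy inclusion from $W\subset V$, finite mean enstrophy from boundedness of $B$ in $V$, the mean energy identity via Galerkin-truncated cylindrical test functionals built from $\psi(\|P_m\bfu\|^2)$, and condition (iii) by approximating $V$-valued test directions by elements of $W$ and passing to the limit with the uniform $V'$-bound on $\bfF$ over $B$. Your compact-support cutoff $\chi$ is a small refinement that the paper glosses over (its $\psi(\|P_m\bfu\|^2)$ is not literally in $\Tcal^\cyl(W')$ since $\psi$ need not be compactly supported), and your pointwise-convergence-plus-dominated-convergence treatment of the cubic term $\dual{\bfB(\bfu,\bfu),P_m\bfu}$ replaces the paper's uniform $L^3$-interpolation estimate, but these are inessential variations on the same argument.
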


\begin{proof}
Let $\mu\in \Pcal_\fpsss(B)$. Since $B$ is a subset of $V$ and $\bfF:V\rightarrow V'$, we have that $\bfF(\bfu)\in V'$, for $\mu$-almost every $\bfu$. Then, if $\Psi\in\Tcal^\cyl(W')$ and considering that $W\subset V$ with continuous inclusion, we find that $\dual{\bfF(\bfu), \Psi'(\bfu)}_{W',W} = \dual{\bfF(\bfu), \Psi'(\bfu)}_{V',V},$ for $\mu$-almost every $\bfu$. Thus, \eqref{ssseulerianinvariancecondition} implies \eqref{ssseq}, which means that $\mu\in \Pcal_\wsss(B)$, proving the inclusion $\Pcal_\fpsss(B) = \Pcal_\wsss(B)$

For the converse inequality $\Pcal_\wsss(B) \subset \Pcal_\fpsss(B)$, let $\mu\in \Pcal_\wsss(B)$. For $\mu$ to be in $\Pcal_\fpsss(B)$, we need to show that \eqref{finitemeanenstrophy} and \eqref{meanenergyineq} hold and also that \eqref{ssseulerianinvariancecondition} holds for $\Psi$ in $\Tcal^\cyl(V')$, and not only in $\Tcal^\cyl(W')$.

The first condition \eqref{finitemeanenstrophy} follows easily from the fact that $\mu$ is carried by $B$, which is a bounded set in $V$.

For the condition \eqref{meanenergyineq}, consider the Galerkin projections $P_m$, $m\in\mathbb{N}$, onto the space generated by the first $\bfw_1,\ldots, \bfw_m$ eigenfunctions of the Stokes operator. Take $\Psi_m(\bfu) = \psi(\|P_m\bfu\|^2)$, which is a cylindrical test functional with $\Psi_m'(\bfu) = 2\sum_{j=1}^m \psi'(\|P_m\bfu\|^2) \langle \bfu,\bfw_j\rangle \bfw_j$. Since each $\bfw_j$ belongs to $D(A)\subset W$, we have that $\Psi_m\in \Tcal^\cyl(W')$. Since $\mu$ is a weak stationary statistical solution, it follows from \eqref{ssseq} that
\[ 2\sum_{j=1}^m \int_H \psi'(\|P_m\bfv\|^2) \langle \bfv, \bfw_j\rangle \langle \bfF(\bfv), \bfw_j \rangle_{V',V} \;d\mu(\bfv) = \int_H \dual{\bfF(\bfv),\Psi_m'(\bfv)}_{W', W} \;\rmd \mu(\bfv) = 0,
\]
which means that
\[ \int_H \psi'(\|P_m\bfv\|^2) \langle P_m \bfF(\bfv), P_m \bfv \rangle_{V',V} \;d\mu(\bfv) = 0.
\]
Using that $P_m \bfF(\bfu) = P_m \bff - \nu AP_m \bfu - P_m B(\bfu,\bfu)$, we have, more explicitly, that
\begin{equation}
  \label{galerkinfullstrenghtenineq}
   \int_H \psi'(\|P_m\bfv\|^2) \langle P_m \bff - \nu AP_m \bfu - P_m B(\bfu,\bfu), P_m \bfv \rangle_{V',V} \;d\mu(\bfv) = 0.
\end{equation}

Since $\mu$ is supported by $B$ and $B$ is bounded in $V$, and using that $\langle B(\bfu,\bfu),\bfu\rangle = 0$, we have
\begin{multline*} 
  |\langle P_m B(\bfu,\bfu),P_m \bfu \rangle| = |\langle B(\bfu,\bfu),P_m \bfu \rangle| = |\langle B(\bfu,\bfu),\bfu \rangle - \langle B(\bfu,\bfu),P_m \bfu \rangle| \\
  \leq \|\bfu\|_{L^6}\|\nabla \bfu\|_{L^2}\|\bfu-P_m \bfu\|_{L^3}.
\end{multline*}
In both two and three space dimensions, we have the continuous inclusion $V \subset L^6(\Omega)^d$. Thus, since $B$ is bounded in $V$, there exists a constant $C>0$ such that
\[ |\langle P_m B(\bfu,\bfu),P_m \bfu \rangle| \leq C \|\bfu-P_m \bfu\|_{L^3},
\]
for every $\bfu\in B$ and any $m\in \mathbb{N}$. Using again the assumption that $B$ is bounded in $V$ and using interpolation of $L^3$ between $L^2$ and $H^1$, we have that $\|\bfu-P_m \bfu\|_{L^3}$ goes to zero as $m\rightarrow \infty$, uniformly for $\bfu$ in $B$. This convergence, together with the previous bound, implies that
\begin{equation}
  \label{bilinearpm}
  \int_H \psi'(\|P_m\bfu\|^2) \langle P_m B(\bfu,\bfu), P_m \bfu \rangle_{V',V} \;d\mu(\bfu) \rightarrow 0,
\end{equation}
as $m\rightarrow \infty$.

Now, taking the limit in \eqref{galerkinfullstrenghtenineq} as $m\rightarrow\infty$ and using \eqref{bilinearpm} we find, at the limit, 
\[ \int_H \psi'(\|\bfu\|^2)\left( \nu \|\nabla \bfu\|^2 - \langle \bff, \bfu \rangle_{V', V} \right) \;d\mu(\bfu) = 0,
\]
which proves condition \eqref{meanenergyineq}.

Now, it remains to show that \eqref{ssseq} extends to \eqref{ssseulerianinvariancecondition}. This follows by a density argument. Indeed, let $\Psi\in \Tcal^\cyl(V')$, which must be of the form $\Psi(\bfu) = \psi(\dual{\bfu,\bfw_1}_{V',V}, \ldots, \dual{\bfu,\bfw_m}_{V', V})$, where $\bfw_k\in V$ and $\psi$ is as in Definition \ref{defpsss}. By the density of $W$ in $V$, we approximate, in the topology of $V$, each $\bfw_k$ by a sequence $(\bfw_k^{(j)})_{j\in\mathbb{N}}$ of elements in $W$. This yields a sequence of cylindrical test functionals $\Psi_j(\bfu) = \psi(\dual{\bfu,\bfw_1^{(j)}}_{W',W}, \ldots, \dual{\bfu,\bfw_m^{(j)}}_{W', W})$ in $\Tcal^\cyl(W')$, for which \eqref{ssseq} holds.

Since $\mu$ is carried by $B$, which is a bounded set in $V$, and since $\bfF$ is bounded from $B$ into $V'$, we use the Lebesgue Dominated Convergence Theorem to pass to the limit in $j$ and find that
\[ \int_H \dual{\bfF(\bfu), \Psi'(\bfu)}_{V',V} \;\rmd\mu(\bfu) = \lim_{j\rightarrow\infty} \int_H \dual{\bfF(\bfu), \Psi_j'(\bfu)}_{W',W} \;\rmd\mu(\bfu) = 0.
\]
This completes the proof that $\mu\in\Pcal_\fpsss(B)$.
\end{proof}

The original definition of stationary statistical solution given in \cite{Foias73} and which is used in \cite{FMRT2001}, however, is based on a different, more general set of test functionals (see \cite[Section 6.1, pp. 10--11]{Foias73} for the definition of stationary statistical solution and \cite[Section 3.1.d, p. 249; Section 3.3, pp. 271--273]{Foias72} for that of the space $\mathcal{T}^{\mathrm{ind}}$ of the corresponding test functionals, and also \cite[Definitions IV.1.2, IV.1.3]{FMRT2001}). We denote them, here, by \emph{general test functions}. Since any cylindrical test functional is a general test functional, the cylindrical test functionals give rise to a possibly larger class of stationary statistical solutions. Thus, results with that larger class have a wider scope. It turns out, though, that both definitions are equivalent at least when the measure is carried by a bounded subset of $V$, as discussed below.

\begin{defs}
\label{defgtf}
A \textbf{general test functional} for the Navier-Stokes equations is a functional $\Psi:V\rightarrow \mathbb{R}$ with the following properties:
\begin{enumerate}
  \item $\Psi:V\rightarrow \mathbb{R}$ is continuous from $V$ into $\mathbb{R}$;
  \item \label{gtfdiff} $\Psi$ is Fr\'echet $H$-differentiable in the direction of $V$, i.e. there exists $D\Psi(\bfu)\in H$ such that
     \[ \lim_{\bfv\in V\setminus\{0\}, \;\|\bfv\|_{L^2}\rightarrow 0} \frac{1}{\|\bfv\|_{L^2}} \left|\Psi(\bfu+\bfv) - \Psi(\bfu) - \langle D\Psi(\bfu), \bfv \rangle_{L^2} \right| = 0.
    \]
  \item \label{gtfdiffbdd} $\bfu\mapsto D_\bfu\Psi(\bfu)$ is continuous and bounded from $V$ into $V$.
\end{enumerate}
\end{defs}

Notice, from \eqref{gtfdiff} and \eqref{gtfdiffbdd} in Definition \ref{defgtf}, that, for any general test functional $\Psi$, there exist constants $c_0, c_1>0$ such that
\begin{equation}
  \label{gtfbdd}
  \|\Psi(\bfu)\|_{L^2} \leq c_0 + c_1 \|\bfu\|_{L^2}, \qquad \forall \bfu\in V.
\end{equation}

\begin{prop}
  \label{wsssgeneraltf}
  Let $B$ be a bounded Borel set in $V$ and let $\mu\in\Pcal_\wsss(B)$ be a weak stationary statistical solution of the Navier-Stokes equations in the sense of Definition \ref{defwsss}. Then, \eqref{ssseq} holds for every general test functional as given in Definition \ref{defgtf}.
\end{prop}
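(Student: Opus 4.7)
The plan is to approximate any general test functional $\Psi$ by elements of $\Tcal^\cyl(W')$, apply the identity \eqref{ssseq} to each approximant, and pass to the limit by dominated convergence, exploiting the fact that $\mu$ is carried by the $V$-bounded set $B$.

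Let $\{\bfw_j\}_{j\in\NN}$ be the eigenfunctions of the Stokes operator $\bfA$, normalized to be $H$-orthonormal. By elliptic regularity on the smooth bounded domain $\Omega$, each $\bfw_j\in \Ccal^\infty(\overline{\Omega})^d\subset W=D(\bfA)\cap W^{1,\infty}(\Omega)^d$. Let $P_m$ denote the orthogonal projection onto $\mathrm{span}\{\bfw_1,\ldots,\bfw_m\}$, which is contractive on both $H$ and $V$ (using that the $\bfw_j$ are $V$-orthogonal as well). The natural candidate approximation is $\Psi_m(\bfu):=\Psi(P_m\bfu)=\psi_m(\langle\bfu,\bfw_1\rangle,\ldots,\langle\bfu,\bfw_m\rangle)$ with $\psi_m(x):=\Psi\bigl(\sum_j x_j\bfw_j\bigr)$; Definition \ref{defgtf} ensures $\psi_m\in \Ccal^1(\RR^m)$ with $\partial_j\psi_m(x)=\bigl(D\Psi(\sum_k x_k\bfw_k),\bfw_j\bigr)_{L^2}$.

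To obtain a bona fide cylindrical test functional I would multiply $\psi_m$ by a smooth cutoff $\chi_R\in\Ccal_c^\infty(\RR^m)$ equal to $1$ on the Euclidean ball of radius $R$, yielding $\Psi_{m,R}\in\Tcal^\cyl(W')$. The crucial point — and the place where $V$-boundedness of $B$ enters — is that Parseval in $H$ gives $\sum_{j=1}^m|\langle\bfu,\bfw_j\rangle|^2\leq\|\bfu\|_H^2$, so $R$ can be chosen \emph{uniformly in $m$}, larger than $\sup_{\bfu\in B}\|\bfu\|_H$, making $\chi_R\equiv 1$ on the cylindrical image of $B$. Then, for $\mu$-a.e.\ $\bfu$, the derivative is $\Psi_{m,R}'(\bfu) = P_m D\Psi(P_m\bfu)$, which lies in $\mathrm{span}\{\bfw_1,\ldots,\bfw_m\}\subset W$. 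Applying \eqref{ssseq} to $\Psi_{m,R}$ and identifying the $W'$--$W$ duality with the $V'$--$V$ duality gives
\[
  \int \langle \bfF(\bfu),\, P_m D\Psi(P_m\bfu)\rangle_{V',V}\, \rmd\mu(\bfu) = 0.
\]

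It then remains to pass to the limit $m\to\infty$ via dominated convergence. Pointwise convergence $P_m D\Psi(P_m\bfu)\to D\Psi(\bfu)$ in $V$ on $B$ follows from $P_m\bfu\to\bfu$ in $V$, continuity of $D\Psi:V\to V$, and the $V$-contractivity of $P_m$. A uniform dominating constant comes from combining the continuity (hence boundedness on $V$-bounded sets) of $\bfF:V\to V'$ with property (iii) of Definition \ref{defgtf}, so the integrand is bounded uniformly in $m$ on $B$ and $\mu(B)=1$. I anticipate the main obstacle to be the bookkeeping around the cutoff: choosing $R$ independent of $m$ is what turns a seemingly truncation-dependent identity into a clean passage to the limit, and it is precisely the Parseval bound in $H$ that makes this possible.
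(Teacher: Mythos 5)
Your proposal is correct and follows essentially the same route as the paper: approximate $\Psi$ by cylindrical test functionals $\Psi_m$ built from Galerkin projections onto Stokes eigenfunctions with a cutoff that is inactive on the $V$-bounded carrier $B$, apply \eqref{ssseq} to each $\Psi_m$, and pass to the limit by dominated convergence using the pointwise convergence $P_mD\Psi(P_m\bfu)\to D\Psi(\bfu)$ in $V$ and the uniform bound from Definition \ref{defgtf}(iii). The only cosmetic difference is that you cut off in the Euclidean ($H$-coordinate) norm while the paper cuts off in the $V$-norm of $P_m\bfu$; both choices make the cutoff identically one on $B$ and yield compactly supported $\psi_m$.
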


\begin{proof}
Let $\Psi$ be a general test functional. The plan is to approximate $\Psi$ by cylindrical test functionals and pass to the limit in \eqref{ssseq}.

Let  $\bfw_j$ be the eigenvectors of the Stokes operator, normalized in $H$, and with corresponding eigenvalues $\lambda_j>0$. Let $P_m$ be the Galerkin projectors over the span of the first $m$ eigenvectors. Let $R>0$ be such that the set $B$ is contained in the ball in $V$ with radius $R$ and centered at the origin, and consider a smooth truncation function $\tau:\mathbb{R}\rightarrow \mathbb{R}$ with compact support, say $\tau(s)=1$ for $|s|\leq 1$, $\tau(s)=0$ for $|s|\geq 4$, and $0\leq \tau(s) \leq 1$, $|\tau'(s)|\leq 1$, for all $s$. Then, define $\Psi_m$, for $m\in \mathbb{N}$, as
\begin{equation}
  \label{defpsim}
  \Psi_m(\bfu) = \tau\left(\frac{\|P_m\bfu\|_V^2}{R^2}\right)\Psi(P_m\bfu).
\end{equation}

In order to see that $\Psi_m$ is a cylindrical test functional, define $\zeta_m:\mathbb{R}^m \rightarrow P_m H$ by
\[ \zeta_m(a) = \sum_{j=1}^m a_j \bfw_j,
\]
where $a=(a_1, \ldots, a_m)\in \mathbb{R}^m$. Then, let
\[ \psi_m(a) = \tau\left(\frac{\|a\|_{m,1}^2}{R^2}\right)\Psi(\zeta_m(a)),
\]
where
\[ \|a\|_{m,1}^2 = \sum_{j=1}^m \lambda_j|a_j|^2.
\]

Notice, now, that $\|a\|_{m, 1} = \|\zeta_m(a)\|_V$ and that $\Psi_m$ can be written as the composition of $\psi$ with the Galerkin projector, i.e $\Psi_m = \psi \circ \zeta_m^{-1}\circ P_m$, or, more explicitly,
\[ \Psi_m(\bfu) = \psi_m(\langle \bfu,\bfw_1\rangle, \ldots, \langle \bfu,\bfw_m\rangle).
\]

By construction, $\psi_m$ is a continuously-differentiable real-valued function, with
\[ \partial_{a_j}\psi_m(a) = \frac{2a_j\lambda_j}{R^2}\tau'\left(\frac{\|a\|_{m,1}^2}{R^2}\right)\Psi(\zeta_m(a)) + \tau\left(\frac{\|a\|_{m,1}^2}{R^2}\right)\langle D_u\Psi(\zeta_m(a)),\bfw_j\rangle_{L^2},
\]
where we used that
\[ \partial_{a_j} \tau\left(\frac{\|a\|_{m,1}^2}{m^2}\right) = \tau'\left(\frac{\|a\|_{m,1}^2}{R^2}\right) \frac{2a_j\lambda_j}{R^2},
\]
and
\[ \partial_{a_j}\Psi(\zeta_m(a)) = \langle D_u\Psi(\zeta_m(a)),\bfw_j\rangle_{L^2}.
\]

Since $\tau(s)$ vanishes for $|s|\geq 4$, we find that $\partial_{a_j}\psi_m(a)$ vanishes for $\|a\|_{m,1} \geq 2R$, for all $j$. Hence, $\psi_m$ has compact support on $\mathbb{R}^m$. This implies that $\Psi_m$ is a cylindrical test functional on $V'$. Thus, \eqref{ssseq} holds for $\Psi_m$.

From the definition of $\Psi_m$, we find that
\begin{equation}
  \label{exprdiffpsim}
  D_\bfu\Psi_m(\bfu) =  \tau'\left(\frac{\|P_m\bfu\|_V^2}{R^2}\right)\Psi(P_m\bfu)\frac{2P_m \bfu}{R^2} + \tau\left(\frac{\|P_m\bfu\|_V^2}{m^2}\right)P_mD_\bfu\Psi(P_m\bfu).
\end{equation}
This means, in particular, that $D_\bfu\Psi_m(\bfu)$ belongs to $V$. Since $\bfF(\bfu)$ belongs to $V'$, for every $\bfu$ in $V$, we also have $\dual{F(v),\Psi_m'(v)}_{V', V} = \dual{F(v),\Psi_m'(v)}_{W', W}$, for every $\bfu\subset B$, which carries $\mu$. Thus,  \eqref{ssseq} holds with $W$ replaced by $V$, i.e.
  \begin{equation}
    \label{ssseqforpsim}
    \int_H \dual{F(v),\Psi_m'(v)}_{V', V} \;\rmd \mu(v) = 0, \qquad \forall m\in\mathbb{N}.
\end{equation}

Now we need to estimate $\|D_\bfu\Psi_m(\bfu)\|_V$. Using \eqref{gtfbdd} and that $\tau$ is bounded by $1$, we find, from \eqref{exprdiffpsim}, that
\begin{multline*}
  \|D_\bfu\Psi_m(\bfu)\|_V \leq  \left|\tau'\left(\frac{\|P_m\bfu\|_V^2}{R^2}\right)\right| \left(c_0 + c_1\|P_m\bfu\|\right)\frac{2\|P_m \bfu\|_V}{R^2} +\|P_mD_\bfu\Psi(P_m\bfu)\|_V \\
  \leq \left|\tau'\left(\frac{\|P_m\bfu\|_V^2}{R^2}\right)\right| \left(c_0 + \frac{c_1}{\lambda_1^{1/2}}\|P_m\bfu\|_V\right)\frac{2\|P_m \bfu\|_V}{R^2} +\|D_\bfu\Psi(P_m\bfu)\|_V.
\end{multline*}
Using that $\tau'(s)$ is also bounded by $1$ and vanishes for $|s|\geq 4$ and that $\|D_\bfu\Psi(\cdot)\|_V$ is bounded, we find
\begin{equation}
  \label{unifboundpsim}
  \|D_\bfu\Psi_m(\bfu)\|_V \leq  \left(c_0 + \frac{2c_1R}{\lambda_1^{1/2}}\right)\frac{4}{R} + c_2,
\end{equation}
for some constant $c_2>0$, which shows that $\|D_\bfu\Psi_m(\bfu)\|_V$ is uniformly bounded in $\bfu\in V$ and in $m\in\mathbb{N}$.

Finally, for any $\bfu\in B$, we have $\|P_m\bfu\|_V^2/R^2 \leq \|\bfu\|_V^2/R^2 \leq 1$. Thus, $\tau(\|P_m\bfu\|_V^2/R^2)$ is identically one, on $B$, and $\tau'(\|P_m\bfu\|_V^2/R^2)$ vanishes. Hence, from \eqref{defpsim} and \eqref{exprdiffpsim}, 
\[ \Psi_m(\bfu) = \Psi(P_m\bfu), \quad D_\bfu\Psi_m(\bfu) =  P_mD_\bfu\Psi(P_m\bfu), \qquad \forall \bfu\in B.
\]
Since $\Psi$ and $D\Psi$ are continuous in $V$ and $P_m\bfu \rightarrow \bfu$ in $V$, as $m\rightarrow\infty$, for each $\bfu\in V$, we find that
\begin{equation}
  \label{pointwiseconvpsim}
  \Psi_m(\bfu) \rightarrow \Psi(\bfu) \textrm{ in } \mathbb{R}, \quad D_\bfu\Psi_m(\bfu) \rightarrow D_\bfu\Psi(\bfu) \textrm{ in } V, \quad \textrm{ as } m\rightarrow \infty, \;\forall \bfu\in B.
\end{equation}

With the pointwise convergence \eqref{pointwiseconvpsim} and the fact that $\mu$ is carried by $B$, we find that the convergences in \eqref{pointwiseconvpsim} hold $\mu$-almost everywhere. With the $\mu$-almost everywhere convergence $D_\bfu\Psi_m(\bfu) \rightarrow D_\bfu\Psi(\bfu) $ and the uniform bound \eqref{unifboundpsim}, we are able to pass to the limit $m\rightarrow \infty$ in \eqref{ssseqforpsim} to prove that \eqref{ssseq} holds for $\Psi$. This completes the proof.
\end{proof}

Therefore, as a consequence of Propositions \ref{wsssisfpsssin2d} and \ref{wsssgeneraltf}, any Foias-Prodi stationary statistical solution in the sense of Definition \ref{defpsss} (and as given in \cite{FMRT2001, FRT2019}) which is carried by a bounded Borel set in $V$ is also a Foias-Prodi stationary statistical solution in the original sense given in \cite{Foias73} (as also used in \cite{FMRT2001}).

\subsection{The two-dimensional case}

In the two-dimensional case, it is well-known that the Navier-Stokes equations generate a continuous semigroup on the space $H$, in the sense of Definition \ref{subsecsemigroups} (see \cite{BabinVishik1992, ConstFoias1989,  Lady1963, SellYou2002, Temam1988}).

With the choice $W=D(\bfA)\cap W^{1,\infty}(\Omega)^2$ described above, the map $\bfF:H\rightarrow W'$ is continuous.

Moreover, it is known that this semigroup possesses a uniformly absorbing set $B_V$ which is a closed ball in $V$, hence compact in $H$ \cite[Section III.2.2]{Temam1988}. Thus, given any closed and invariant set $B$ for the semigroup, it is always possible to find a closed and bounded set $K$ in $V$ which absorbs the points of $B$ (we may always take $K=B_V$, but it may be useful to allow for a smaller set, in case we want to localize the minimax estimate).

With $K$ bounded in $V$, it is proved in \cite[Proposition 6.2]{Foias73}  that Foias-Prodi stationary statistical solutions satisfying  \eqref{ssseq} for general test functional as in Definition \ref{defgtf} are invariant. Thanks to Propositions \ref{wsssisfpsssin2d} and \ref{wsssgeneraltf}, this means that $\Pcal_\wsss(K)\subset \Pcal_\inv(K)$. This, together with Proposition \ref{propinvimplieswsss}, implies that $\Pcal_\inv(K) = \Pcal_\wsss(K)$. Since $B$ is invariant, this means that $\Pcal_\inv(B\cap K)= \Pcal_\wsss(B\cap K)$.

Therefore, Theorem \ref{thmminimaxfortimeave} applies with such $B$ and $K$, and we obtain the following result.
\begin{thm}
  Let $\{S(t)\}_{t\geq 0}$ be the continuous semigroup generated by the two-dimensional Navier-Stokes equations on $H$, let $B$ be a positively invariant set which is closed in $H$, and let $K$ be a closed and bounded subset of $V$ which attracts every point in $B$. Let $\phi \in \Ccal_b(B)$. Then,
  \begin{multline} 
    \label{thmminimaxfortimeave2dnse}
    \max_{u_0\in B} \limsup_{T\rightarrow \infty} \frac{1}{T}\int_0^T \phi(S(t)u_0) \;\rmd t \\ 
      = \inf_{\Psi\in \Tcal^\cyl(W')} \max_{u\in B\cap K} \left\{\phi(u) + \dual{F(u), \Psi'(u)}_{W', W}\right\}.
  \end{multline}
\end{thm}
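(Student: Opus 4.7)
The plan is to deduce this theorem from the general Theorem \ref{thmminimaxfortimeave} by verifying, in the present setting, each of its hypotheses for the two-dimensional Navier-Stokes semigroup. The abstract framework of Section \ref{subsecsss} has already been matched to the Navier-Stokes equations in the preceding subsection: we take $X=H$, the Banach space $W=D(\bfA)\cap W^{1,\infty}(\Omega)^2$ with continuous inclusion $H\subset W'$, and the Borel map $\bfF:H\rightarrow W'$. The existence of the continuous semigroup $\{S(t)\}_{t\geq 0}$ on $H$ associated with equation \eqref{diffeq} is classical in the two-dimensional case.

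Next I would check the topological and continuity hypotheses. The space $H$ is a separable Hilbert space, hence metrizable, so any closed subset $B$ of $H$ is metrizable and in particular normal. The set $K$ is closed and bounded in $V$ and since the injection $V\hookrightarrow H$ is compact by Rellich-Kondrachov, $K$ is a compact subset of $H$; as a subset of the metric space $H$, it is metrizable. By hypothesis $K$ attracts the points of $B$. Finally, as already recorded after the definition of $\bfF$, the extension \eqref{bilinestH} shows that $\bfB(\cdot,\cdot):H\times H\rightarrow W'$ is continuous, and together with the continuity of $\bfA:H\rightarrow W'$ this gives $\bfF\in \Ccal(H;W')$; in particular $\bfF$ is continuous on the compact set $K$.

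The main step is to establish the identity $\Pcal_\wsss(B\cap K)=\Pcal_\inv(B\cap K)$ required by Theorem \ref{thmminimaxfortimeave}. The inclusion $\Pcal_\inv(B\cap K)\subset \Pcal_\wsss(B\cap K)$ follows from Proposition \ref{propinvimplieswsss}, since $B\cap K$ is bounded in $V$ and $\bfF$ carries bounded sets in $V$ into bounded sets in $W'$. For the converse, given $\mu\in\Pcal_\wsss(B\cap K)$, I would apply Propositions \ref{wsssisfpsssin2d} and \ref{wsssgeneraltf} to upgrade the defining identity \eqref{ssseq} from $\Tcal^\cyl(W')$ first to the class of Foias-Prodi stationary statistical solutions and then to the larger class of general test functionals of Definition \ref{defgtf}. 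This places $\mu$ in the setting of \cite[Proposition 6.2]{Foias73}, where it is shown in two dimensions (via approximation by a differentiable, left-invertible Galerkin-type semigroup) that every such measure is invariant for $\{S(t)\}_{t\geq 0}$.

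With all hypotheses verified, Theorem \ref{thmminimaxfortimeave} applied to this $B$, $K$, and $\phi$ yields \eqref{thmminimaxfortimeave2dnse}. I expect the only subtle point to be the equivalence $\Pcal_\wsss(B\cap K) = \Pcal_\inv(B\cap K)$: while Proposition \ref{propinvimplieswsss} gives the easy direction essentially for free, the converse hinges on the two-dimensional well-posedness theory and the Foias-Prodi regularization argument, which is precisely the ingredient that makes the optimal minimax identity, as opposed to only the one-sided upper bound of Theorem \ref{thmminimaxuppserboundforsss}, available in the 2D case.
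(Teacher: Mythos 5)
Your proposal is correct and follows essentially the same route as the paper: verify the hypotheses of the general semigroup theorem with $X=H$, $W=D(\bfA)\cap W^{1,\infty}(\Omega)^2$, note that $K$ is compact in $H$ and $\bfF$ is continuous there, and obtain the key identity $\Pcal_\wsss(B\cap K)=\Pcal_\inv(B\cap K)$ by combining Proposition \ref{propinvimplieswsss} with Propositions \ref{wsssisfpsssin2d}, \ref{wsssgeneraltf} and \cite[Proposition 6.2]{Foias73}. You also correctly single out this equivalence as the decisive two-dimensional ingredient, exactly as the paper does.
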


Since we do not need $K$ to be uniformly attracting, just pointwise attracting, then we can take $B$ to be the whole space $H$, and take $K=B_V$. In this case, we have the following corollary of the above.

\begin{cor}
  Let $\{S(t)\}_{t\geq 0}$ be the continuous semigroup generated by the two-dimensional Navier-Stokes equations. Let $\phi \in \Ccal_b(H)$. Then,
  \begin{multline} 
    \label{corminimaxfortimeave2dnse}
    \max_{u_0\in B} \limsup_{T\rightarrow \infty} \frac{1}{T}\int_0^T \phi(S(t)u_0) \;\rmd t \\ 
      = \inf_{\Psi\in \Tcal^\cyl(W')} \max_{u\in B_V} \left\{\phi(u) + \dual{F(u), \Psi'(u)}_{W', W}\right\}.
  \end{multline}
\end{cor}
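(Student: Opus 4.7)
The plan is to derive this as a direct specialization of the preceding theorem, by choosing $B = H$ and $K = B_V$, where $B_V$ is the uniformly absorbing ball in $V$ provided by the standard theory of the $2$D Navier--Stokes equations. The claim in the corollary (with $B$ read as $H$) then matches the statement of the preceding theorem exactly, so essentially the work reduces to checking the hypotheses.

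First, I would verify that $B = H$ is positively invariant for $\{S(t)\}_{t\geq 0}$, which is trivial, and closed in $H$. Next, for the attracting set, I take $K = B_V$, the absorbing ball in $V$ guaranteed by \cite[Section III.2.2]{Temam1988}. Since $\Omega$ is bounded with smooth boundary, the embedding $V \hookrightarrow H$ is compact, so the closed bounded set $B_V \subset V$ is compact in $H$. As a subset of the separable Hilbert space $H$, it is also metrizable. The uniform absorbing property implies, in particular, the pointwise attracting property required in the preceding theorem: for any $u_0 \in H$, $S(t)u_0 \in B_V$ for all $t$ past some time $T(u_0)$, which certainly implies convergence to $B_V$ in every open neighborhood. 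Also, $B \cap K = B_V$.

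With these verifications in place, I would invoke the preceding theorem directly, obtaining
\[
  \max_{u_0\in H} \limsup_{T\rightarrow \infty} \frac{1}{T}\int_0^T \phi(S(t)u_0) \;\rmd t
    = \inf_{\Psi\in \Tcal^\cyl(W')} \max_{u\in B_V} \left\{\phi(u) + \dual{F(u), \Psi'(u)}_{W', W}\right\},
\]
which is \eqref{corminimaxfortimeave2dnse}. The only non-routine point is checking that the equivalence $\Pcal_\inv(B\cap K) = \Pcal_\wsss(B\cap K)$ needed to apply that theorem really holds here, but this was already established in the discussion preceding the theorem, using Propositions \ref{wsssisfpsssin2d} and \ref{wsssgeneraltf} together with \cite[Proposition 6.2]{Foias73} and Proposition \ref{propinvimplieswsss}, precisely because $K = B_V$ is bounded in $V$.

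I expect no real obstacle here: the corollary is essentially a cosmetic restatement emphasizing that the pointwise attracting hypothesis is satisfied for the canonical global absorbing ball in $V$, so the whole space $H$ can serve as the invariant set $B$. The only subtlety worth double-checking is that $\phi \in \Ccal_b(H)$ in the corollary restricts to an element of $\Ccal_b(H) \subset \Ccal_b(B)$ in the notation of the preceding theorem, which is immediate.
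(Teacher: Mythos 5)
Your proposal is correct and coincides with the paper's own argument: the paper derives the corollary precisely by taking $B=H$ and $K=B_V$ in the preceding theorem, relying on the fact that the absorbing ball $B_V$ is compact in $H$ and attracts every point of $H$, and that $\Pcal_\inv(B_V)=\Pcal_\wsss(B_V)$ was already established. Your additional verifications (closedness and positive invariance of $H$, metrizability of $B_V$, $\phi\in\Ccal_b(H)=\Ccal_b(B)$) are exactly the routine checks the paper leaves implicit.
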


\subsection{The three-dimensional case}
\label{sec3dnse}

Concerning weak stationary statistical solutions, Theorem \ref{thmminimaxforsss} yields, in the context of the three-dimensional Navier-Stokes equations, the following result.
\begin{thm}
  \label{thmminimaxfor3Dnsesss}
  Suppose $K$ is a compact subset of $H$ and that the set $\Pcal_\wsss(K)$ of weak stationary statistical solutions carried by $K$ is nonempty. Let $\phi\in \Ccal(K)$. Then,
  \begin{equation} 
    \max_{\mu\in\Pcal_\wsss(K)} \int_K \phi(u) \;\rmd\mu(u) = \inf_{\Psi\in \Tcal^\cyl(W')} \max_{u\in K} \left\{\phi(u) + \dual{\bfF(u), \Psi'(u)}_{W', W}\right\}.
  \end{equation}
\end{thm}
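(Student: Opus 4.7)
The plan is to observe that this theorem is an immediate corollary of Theorem \ref{thmminimaxforsss} applied with the specific choices $X = H$ and $W = D(\bfA)\cap W^{1,\infty}(\Omega)^3$, and that the main effort lies in verifying that the abstract hypotheses of that theorem are met in the Navier-Stokes setting. So the proof reduces to checking four conditions: that $K$ is compact and metrizable in $X$, that $\bfF$ is continuous on $K$, that $\Pcal_\wsss(K)$ is nonempty, and that $\phi \in \Ccal(K)$.

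First I would note that $H$ is a separable Hilbert space, hence a metric space, so every subset of $H$ is metrizable. In particular, the compact subset $K\subset H$ given in the hypothesis is automatically compact and metrizable as a subspace of the Hausdorff space $X=H$. This handles the topological requirement on $K$ that is needed both for the representation of extreme points of $\Pcal(K)$ as Dirac measures and for the compactness of $\Pcal(K)$ via \cite[Theorem 15.11]{AliBor2006} used inside Theorem \ref{thmminimaxforsss}.

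Next I would invoke the continuity of $\bfF:H\rightarrow W'$ already established in the paragraph preceding Section \ref{secweakandfpsss}: the bilinear term $\bfB(\cdot,\cdot)$ extends continuously from $H\times H$ into $W'$ thanks to the estimate \eqref{bilinestH}, the Stokes term $\bfu\mapsto\nu \bfA\bfu$ is continuous from $H$ into $D(\bfA^{-1})\subset W'$, and the forcing $\bff$ lies in $V'\subset W'$. Hence $\bfF$ is continuous from $H$ into $W'$, and its restriction to the compact set $K$ is a fortiori continuous. The nonemptiness of $\Pcal_\wsss(K)$ and the regularity $\phi\in\Ccal(K)$ are assumed outright.

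With all hypotheses of Theorem \ref{thmminimaxforsss} verified, I would simply apply that theorem to conclude the stated identity. There is no genuine obstacle here, since all the work has been done either in the abstract Section 4 (the minimax machinery, Sion's theorem, the passage from $\Pcal(K)$ to Dirac measures via Bauer's principle) or in the preliminaries of Section 6 (the continuity of $\bfF$ on $H$ into $W'$). The only thing worth remarking in the write-up is why no semigroup or invariance structure is needed: the statement is about weak stationary statistical solutions, which are defined purely by the Eulerian-type identity \eqref{ssseq}, and this is precisely the setting in which Theorem \ref{thmminimaxforsss} operates, bypassing the well-posedness issues that plague the three-dimensional Navier-Stokes equations.
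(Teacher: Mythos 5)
Your proposal is correct and follows exactly the paper's route: the paper derives this theorem by directly applying Theorem \ref{thmminimaxforsss} with $X=H$ and $W=D(\bfA)\cap W^{1,\infty}(\Omega)^3$, remarking (as you do) that any compact subset of the Hilbert space $H$ is automatically metrizable and that the continuity of $\bfF:H\rightarrow W'$ was already established via the estimate \eqref{bilinestH}. Nothing is missing.
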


Notice that, since $H$ is a Hilbert space, any compact subset $K$ in $H$ is metrizable, so we do not need to add this condition to $K$, in Theorem \ref{thmminimaxfor3Dnsesss}.

Theorem \ref{thmminimaxfor3Dnsesss} has to be taken with caution, due to the following remark.

\begin{rmk}
  If $\bfu_*$ is a steady solution of the 3D NSE, then the associated Dirac delta measure $\delta_{\bfu_*}$ is a \emph{trivial} stationary statistical solution carried by a compact set, so the statement above is not empty. Periodic and quasi-periodic orbits also lead to stationary statistical solutions carried by compact sets. In general, however, \emph{it is not known} whether \emph{interesting} types of stationary statistical solutions are carried by sets which are compact in $H$. The possible lack of regularity of the solutions prevent us to deduce that all stationary statistical solutions are carried by a compact set in $H$, as usually happens in well-posed dissipative systems.
\end{rmk}
\begin{rmk}
  Another possibility is to consider the space $H_\rmw$, which is the space $H$ endowed with the weak topology, and which seems to be more natural for the study of statistical solutions \cite{FMRT2001, FRT2013, FRT2019}. In particular, it is known that there exists a (weakly) compact set that carries \emph{all} the stationary statistical solutions. However, if we attempt to use $X=H_\rmw$, then we loose the continuity of the bilinear map $\bfB(\cdot, \cdot):X\rightarrow W'$, and the proof above does not work.
\end{rmk}

Concerning upper bound estimates that may not be optimal, we relax the condition that the set be compact and obtain, using Proposition \ref{propminimaxupperbound} instead of Sion's Minimax Theorem \ref{thmsionminimax}, and the general result given by Proposition \ref{propinvimplieswsss}, the following result.
\begin{thm}
  \label{thmminimaxfor3Dfpsss}
  Let $B$ be a bounded Borel set in $H$ such that $\Pcal_\wsss(B)$ is not empty. Let $\phi\in \Ccal(B)$. Then,
  \begin{multline} 
    \sup_{\mu\in\Pcal_\fpsss(B)} \int_B \phi(u) \;\rmd\mu(u) \leq \sup_{\mu\in\Pcal_\wsss(B)} \int_B \phi(u) \;\rmd\mu(u) \\
    \leq \inf_{\Psi\in \Tcal^\cyl(W')} \sup_{u\in B} \left\{\phi(u) + \dual{\bfF(u), \Psi'(u)}_{W', W}\right\}.
  \end{multline}
\end{thm}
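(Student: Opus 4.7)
The proof naturally splits into the two displayed inequalities. For the first, I would invoke the inclusion $\Pcal_\fpsss(B) \subset \Pcal_\wsss(B)$. This inclusion is essentially immediate from the definitions: any Foias--Prodi stationary statistical solution $\mu$ satisfies the identity \eqref{ssseulerianinvariancecondition} for $\Psi \in \Tcal^\cyl(V')$, and since $W \subset V$ with continuous inclusion we have $\Tcal^\cyl(W') \subset \Tcal^\cyl(V')$ with the duality pairings $\langle \bfF(\bfu), \Psi'(\bfu)\rangle_{W',W}$ and $\langle \bfF(\bfu), \Psi'(\bfu)\rangle_{V',V}$ agreeing $\mu$-a.e. (this agreement was the argument already used in the proof of Proposition \ref{wsssisfpsssin2d}). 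Taking the supremum of $\int_B \phi \;\rmd\mu$ over the smaller class $\Pcal_\fpsss(B)$ can only produce a value no larger than the supremum over $\Pcal_\wsss(B)$.

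For the second inequality, the idea is a direct weak-duality argument in the spirit of Proposition \ref{propminimaxupperbound}. Let $\mu \in \Pcal_\wsss(B)$ and $\Psi \in \Tcal^\cyl(W')$ be arbitrary. By Definition \ref{defwsss}, the map $\bfu \mapsto \dual{\bfF(\bfu), \Psi'(\bfu)}_{W',W}$ is $\mu$-integrable with vanishing integral, so
\begin{equation*}
  \int_B \phi(\bfu)\;\rmd\mu(\bfu) = \int_B \left\{\phi(\bfu) + \dual{\bfF(\bfu), \Psi'(\bfu)}_{W',W}\right\}\;\rmd\mu(\bfu).
\end{equation*}
Since $\mu$ is a probability measure carried by $B$, the right-hand side is bounded above by $\sup_{\bfu \in B}\{\phi(\bfu) + \dual{\bfF(\bfu), \Psi'(\bfu)}_{W',W}\}$. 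Taking the infimum over $\Psi \in \Tcal^\cyl(W')$ on the right and then the supremum over $\mu \in \Pcal_\wsss(B)$ on the left (the two operations are independent because the left-hand integral does not depend on $\Psi$) yields the desired inequality.

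There is essentially no major obstacle here: unlike Theorem \ref{thmminimaxforsss}, we are not claiming equality, so Sion's Minimax Theorem is not invoked, neither is the compactness and metrizability of the carrier, nor the continuity of $\bfF$ on it. The only mild care needed is to ensure the integrand $\dual{\bfF(\bfu), \Psi'(\bfu)}_{W',W}$ is $\mu$-integrable so that the identity \eqref{ssseq} can be written as above; but this is built into Definition \ref{defwsss}. Note that the supremum on $B$ on the right-hand side may well be $+\infty$ for some $\Psi$, and the integral on the left may be unbounded if $\phi$ is not bounded, but both inequalities hold in the extended reals and neither rules out the estimate from being informative for suitable choices of $\Psi$.
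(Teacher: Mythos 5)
Your proposal is correct and follows essentially the same route as the paper, which derives the second inequality from the weak-duality principle of Proposition \ref{propminimaxupperbound} (rather than Sion's theorem, since $B$ need not be compact) and the first from the inclusion $\Pcal_\fpsss(B)\subset\Pcal_\wsss(B)$ coming from $W\subset V$. Your explicit verification of the integrability and of the agreement of the $(W',W)$ and $(V',V)$ pairings $\mu$-a.e.\ just fills in details the paper leaves implicit.
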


A particular type of Foias-Prodi stationary statistical solution is obtained as generalized limits of time averages of Leray-Hopf weak solutions (see \cite{FoiTem75, FMRT2001, FRT2019}). The following result uses this concept. But first we need some definitions.

We say that a set $B\subset H$ is \textbf{positively invariant} with respect to the set of Leray-Hopf weak solutions if, given $\bfu_0\in B$, \emph{any} such solution $\bfu$ with initial condition $\bfu(0)=\bfu_0$ is such that $\bfu(t)\in B$, for all $t\geq 0$. We denote by $\Ucal(B)$ the set of all Leray-Hopf weak solutions with values in $B$. It is known that a sufficiently large ball $B_R$ in $H$ absorbs all the Leray-Hopf weak solutions and, being a bounded ball on a Hilbert space, $B_R$ is compact for the weak topology in $H$ (see \cite{FoiTem75, Temam1988, FMRT2001, FRT2019}). 

Then, the fact that these generalized limits lead to Foias-Prodi stationary statistical solutions yields
\[ \sup_{\bfu\in \Ucal(B)}\limsup_{T\rightarrow \infty} \frac{1}{T} \int_0^T \phi(\bfu(t)) \;\rmd t \leq \max_{\mu\in\Pcal_\fpsss(B\cap B_R)} \int_K \phi(u) \;\rmd\mu(u),
\]
so that, using Theorem \ref{thmminimaxfor3Dfpsss}, we find
\begin{thm}
  \label{thmminimaxfor3Dlhws}
  Let $B\subset H$ be a positively invariant set for the Leray-Hopf weak solutions of the three-dimensional Navier-Stokes equations. Let $B_R$ be a sufficiently large ball in $H$ that absorbs all the Leray-Hopf weak solutions. Let $\phi\in \Ccal(K)$. Then,
  \begin{multline} 
    \sup_{\bfu\in \Ucal(B)}\limsup_{T\rightarrow \infty} \frac{1}{T} \int_0^T \phi(\bfu(t)) \;\rmd t \\
    \leq \inf_{\Psi\in \Tcal^\cyl(W')} \max_{u\in B \cap B_R} \left\{\phi(u) + \dual{\bfF(u), \Psi'(u)}_{W', W}\right\}.
  \end{multline}
\end{thm}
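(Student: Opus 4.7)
The plan is to insert a Foias--Prodi stationary statistical solution between the left- and right-hand sides and then invoke Theorem~\ref{thmminimaxfor3Dfpsss}. Specifically, I would establish the single intermediate inequality
\[ \sup_{\bfu\in \Ucal(B)}\limsup_{T\rightarrow \infty} \frac{1}{T} \int_0^T \phi(\bfu(t)) \;\rmd t \;\leq\; \sup_{\mu\in\Pcal_\fpsss(B\cap B_R)} \int_{B\cap B_R} \phi(u) \;\rmd\mu(u), \]
after which the conclusion follows by composing with the chain of inequalities already established in Theorem~\ref{thmminimaxfor3Dfpsss}, since $\Pcal_\fpsss(B\cap B_R)\subset \Pcal_\wsss(B\cap B_R)$ and the bound on the right is a minimax quantity over $B\cap B_R$.

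To obtain the intermediate inequality, I would fix an arbitrary $\bfu\in\Ucal(B)$ and, as in Section~\ref{secgenlim}, select a generalized limit $\Lim$ tailored to the given test function $\phi$, so that
\[ \Lim \frac{1}{T}\int_0^T \phi(\bfu(t))\;\rmd t \;=\; \limsup_{T\rightarrow\infty}\frac{1}{T}\int_0^T \phi(\bfu(t))\;\rmd t. \]
Since $B_R$ absorbs all Leray--Hopf weak solutions and $B$ is positively invariant for them, the orbit $\bfu(\cdot)$ is eventually in $B\cap B_R$. Viewing $B_R$ as a weakly compact subset of $H$, the time-averaging procedure of Section~\ref{sectimeave} associates with $\Lim$ a Borel probability measure $\mu_\bfu$ carried by $B\cap B_R$ satisfying
\[ \int_H \varphi \;\rmd\mu_\bfu \;=\; \Lim \frac{1}{T}\int_0^T \varphi(\bfu(t))\;\rmd t \]
for every bounded weakly continuous $\varphi$, and in particular for $\phi$.

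The key classical input, established for Leray--Hopf weak solutions of the 3D Navier--Stokes equations in \cite{FoiTem75, FMRT2001, FRT2019}, is that such a measure $\mu_\bfu$ satisfies the three defining conditions of a Foias--Prodi stationary statistical solution in Definition~\ref{defpsss}: the finite-mean-enstrophy condition \eqref{finitemeanenstrophy} follows from the a~priori bound on the time average of $\|\bfnabla \bfu(t)\|^2$ coming from the energy inequality; the mean energy inequality \eqref{meanenergyineq} is obtained by applying $\Lim$ to the Leray energy inequality written for $\psi(\|\bfu(t)\|^2)$; and the Eulerian-type identity \eqref{ssseulerianinvariancecondition} follows from the time-translation invariance \eqref{timeaveinvgenlim} of $\Lim$ combined with the weak formulation of the equations. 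Granting this, $\mu_\bfu\in \Pcal_\fpsss(B\cap B_R)$ and $\int_{B\cap B_R}\phi\,\rmd\mu_\bfu$ equals the prescribed limsup, so taking $\sup_\bfu$ yields the intermediate inequality.

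The main obstacle is precisely this verification, which is genuinely harder in the 3D case than in the 2D one: there is no semigroup on $H$ available, so one cannot appeal to Proposition~\ref{propinvimplieswsss}; instead the Foias--Prodi properties must be obtained directly from the weak formulation of the equations and the a~priori bounds on Leray--Hopf solutions. In the present paper, however, this step is outsourced to the cited references, so the theorem itself is proved by a short combination: generalized-limit construction, invocation of the Foias--Prodi property, and Theorem~\ref{thmminimaxfor3Dfpsss}.
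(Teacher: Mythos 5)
Your proposal is correct and follows essentially the same route as the paper: the paper likewise bounds the left-hand side by $\max_{\mu\in\Pcal_\fpsss(B\cap B_R)}\int\phi\,\rmd\mu$ using the classical fact (cited to \cite{FoiTem75, FMRT2001, FRT2019}) that generalized limits of time averages of Leray--Hopf weak solutions produce Foias--Prodi stationary statistical solutions, and then concludes via Theorem~\ref{thmminimaxfor3Dfpsss}. Your write-up merely spells out the generalized-limit/time-average-measure construction in more detail than the paper, which outsources that step entirely to the references.
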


\section*{Acknowledgments}

We thank the referee for his/her thorough assessment of the original version of the manuscript, with a number of comments that helped improve the quality of this paper. This work was partly supported by the CNPq, Bras\'{\i}lia, Brazil, by the National Science Foundation (NSF) under the grant DMS-1510249, and by the Research Fund of Indiana University.


\begin{thebibliography}{25}

\bibitem{AliBor2006} C. Aliprantis and K. Border, \emph{Infinite Dimensional Analysis, A Hitchhiker's Guide.} Third edition. Springer, Berlin, 2006.

\bibitem{BabinVishik1992} A. V. Babin, M. I. Vishik, Babin, \emph{Attractors of Evolution Equations.} Studies in Mathematics and Its Applications, 25, North-Holland, New York, 1992.

\bibitem{KrylovBogolyubov1937} N. M. Krylov and N. N. Bogolyubov, La th\'eorie g\'en\'erale de la mesure dans son application \`a  l'\'etude des syst\`emes dynamiques de la m\'ecanique non lin\'eaire, \emph{Ann. of Math.} (2), 38 (1937), 65--113.

\bibitem{BMR2016} A.C. Bronzi, C.F. Mondaini, and R.M.S. Rosa, Abstract framework for the theory of statistical solutions, \emph{J. Diff. Eq.} 260 (2016), 8428--8484.

\bibitem{CheGouHuaPa2014} S. I. Chernyshenko, P. Goulart, D. Huang, and A. Papachristodoulou, \emph{Philos. Trans. R. Soc. A} 372 (2014), 1--18.

\bibitem{ConstFoias1989} P. Constantin and C. Foias, \emph{Navier-Stokes Equation,} University of Chicago Press, Chicago, 1989.

\bibitem{CoFoSi1982} I. P. Cornfeld, S. V. Fomin, Ya. G. Sinai, \emph{Ergodic Theory}, Springer-Verlag, New York, 1982.

\bibitem{DunSch1958} N. Dunford, J. T. Schwartz, \emph{Linear Operators, I. General Theory.} Pure and Applied Mathematics. Interscience, New York (1958).

\bibitem{EkeTem1976} I. Ekeland, R. Temam, \emph{Convex Analysis and Variational Problems,} Vol 1, 1st ed., Elsevier Science, 1976.

\bibitem{Foias72} C. Foias, Statistical study of Navier-Stokes equations I, \emph{Rend. Sem. Mat. Univ. Padova} 48 (1972), 219--348.

\bibitem{Foias73} C. Foias, Statistical study of Navier-Stokes equations II, \emph{Rend. Sem. Mat. Univ. Padova} 49 (1973), 9--123.

\bibitem{FMRT2001} C. Foias, O. P. Manley, R. Rosa, and R. Temam, \emph{Navier-Stokes Equations and Turbulence}, Encyclopedia of Mathematics and its Applications, vol. 83. Cambridge University Press, Cambridge 2001.

\bibitem{FoiasProdi1976} C. Foias and G. Prodi, Sur les solutions statistiques des \'equations de Navier-Stokes, \emph{Ann. Mat. Pura Appl.} 111 (1976), no. 4, 307--330.

\bibitem{FRT2013} C. Foias, R. Rosa, and R. Temam, Properties of time-dependent statistical solutions of the three-dimensional Navier-Stokes equations, \emph{Annales de L'Institute Fourier}, Tome 63 (2013), no. 6, 2515--2573.

\bibitem{FRT2015} C. Foias, R. Rosa, and R. Temam, Convergence of time averages of weak solutions of the three-dimensional Navier-Stokes equations, \emph{J. Stat. Phys.} 160 (2015), 519--531.

\bibitem{FRT2019} C. Foias, R. Rosa, and R. Temam, Properties of stationary statistical solutions of the three-dimensional Navier-Stokes equations, \emph{Journal of Dynamics and Differential Equations} Vol. 31 (2019), 1689--1741.

\bibitem{FoiTem75} C. Foias and R. Temam, On the stationary of the Navier-Stokes equations and turbulence,  \emph{Publications Mathematiques d'Orsay,} no. 120-75-28 (1975), 38--77.

\bibitem{Jenkinson2019} O. Jenkinson, Ergodic optimization in dynamical systems. \emph{Ergod. Th. \& Dynam. Sys.} 39 (2019), no. 10, 2593--2618.

\bibitem{KloMRRea2009} P. E. Kloeden, P. Mar\'in-Rubio, and J. Real, Equivalence of invariant measures and stationary statistical solutions for the autonomous globally modified Navier-Stokes equations, \emph{Communications on Pure \& Applied Analysis}, 8 (2009), no. 3, 785--802,

\bibitem{Komiya1988} H. Komiya, Elementary proof for Sion's minimax theorem, \emph{Kodai Math. J.} 11 (1988), 5--7.

\bibitem{Krengel1985} U. Krengel, \emph{Ergodic Theorems.} With a supplement by Antoine Brunel. De Gruyter Studies in Mathematics, vol. 6. Walter de Gruyter \& Co., Berlin, 1985.

\bibitem{Lady1963} O. Ladyzhenskaya, \emph{The Mathematical Theory of Viscous Incompressible Flow,} Revised English edition, Translated from the Russian by Richard A. Silverman Gordon and Breach Science Publishers, New York-London, 1963.

\bibitem{Parrilo2003} P. A. Parrilo, Semidefinite programming relaxations for semialgebraic problems, \emph{Math. Program.} 96 (2003), 293--320.
  
\bibitem{Prohorov1956} Y. V. Prohorov, Convergence of random processes and limit theorems in probability theory, \emph{Theory Probab. Appl.} 1 (1956) 157--214.

\bibitem{RTinpreparation} R. Rosa, and R. Temam, Optimal minimax bounds for time and ensemble averages of dissipative infinite-dimensional systems, \emph{in preparation.}

\bibitem{Rudin1987} W. Rudin, \emph{Real and Complex Analysis,} 3rd. ed. McGraw-Hill, New York, 1987.

\bibitem{SellYou2002} G. R. Sell, Y. You, \emph{Dynamics of Evolutionary Equations}, Applied Mathematical Sciences 143, Springer-Verlag New York, 2002.

\bibitem{Simon2011} B. Simon, \emph{Convexity: An Analytic Viewpoint.} Cambridge Tracts in Mathematics, Cambridge University Press, 2011.

\bibitem{Sion1958} M. Sion, On general minimax theorems, \emph{Pacific J. Math.} 8 (1958), 171--176.

\bibitem{Temam1988} R. Temam, \emph{Infinite Dimensional Dynamical Systems in Mechanics and Physics,} Applied Mathematical Sciences 68 (2nd ed., 1997), Springer-Verlag, New York, 1988.

\bibitem{TobGolDoe2017} I. Tobasco, D. Goluskin, and C. R. Doering, Optimal bounds and extremal trajectories for time averages in nonlinear dynamical systems, \emph{Physics Letters A}, Volume 382 (2018), no. 6,  382--386.

\bibitem{vishikfursikov79} M. I. Vishik and A. V. Fursikov, Translationally homogeneous statistical solutions and individual solutions with infinite energy of a system of Navier-Stokes equations, \emph{Siberian Mathematical Journal}, Vol. 19 (1979), no. 5, 710--729 (Translated from Sibirskii Matematicheskii Sbornik, Vol. 19, no. 5, 1005--1031, September-October 1978.)

\bibitem{vishikfursikov88} M. I. Vishik and A. V. Fursikov, \emph{Mathematical Problems of Statistical Hydrodynamics,} Kluwer, Dordrecht, 1988.

\bibitem{Walters1982} P. Walters, \emph{An Introduction to Ergodic Theory}, (Graduate Texts in Mathematics, Vol. 79), Springer-Verlag, New York-Berlin, 1982.

\end{thebibliography}
\end{document}